        \newcommand{\xydblarrow}[2]{\ar@{+->+}@<.3ex>[r]^{\smash[t]{#1}}
          \ar@{+->+}@<-.3ex>[r]_*{\smash[b]{\raisebox{-0.5\height}{$\scriptstyle #2$}}}}
        \newcommand{\mathscr}[1]{\mathcal{{#1}}}
        \newcommand{\llbracket}{[\![} 
       \newcommand{\rrbracket}{]\!]}
        \newcommand{\tensor}{\otimes}
        \newcommand{\homotopic}{\simeq}
        \newcommand{\Z}{\mathbb{Z}}
        \newcommand{\Coll}{\mathscr{C}}
        \newcommand{\Func}[1]{\mathscr{F}_{#1}}
        \renewcommand{\bar}{\overline}
        \DeclareMathOperator{\supp}{supp}
        \newcommand{\Orb}{\mathscr{O}}
        \DeclareMathOperator{\Hom}{Hom}
        \DeclareMathOperator{\Ext}{Ext}
        \newcommand{\Identity}[1]{\mathrm{id}_{#1}}
        \DeclareMathOperator{\im}{im}
        \DeclareMathOperator{\coker}{coker}
       \DeclareMathOperator{\spf}{Spf}
        \newcommand{\Aff}{\mathbb{A}}
        \DeclareMathOperator{\Pic}{Pic}
       \DeclareMathOperator{\spec}{Spec}
        \DeclareMathOperator{\Isom}{Isom}
  \newcommand{\Et}[1]{\mathbf{Et}({#1})}
  \newcommand{\Etsep}[1]{\mathbf{Et}_{\mathrm{s}}({#1})}
  \newcommand{\Of}[1]{\mathbf{OC}({#1})}
  \newcommand{\Etad}[2]{\mathbf{Et}_{\mathrm{t}/{#2}}({#1})}
  \newcommand{\Etadsep}[2]{\mathbf{Et}_{\mathrm{s},\mathrm{t}/{#2}}({#1})}
  \newcommand{\et}[1]{{#1}_{\mathrm{\acute{e}t}}}
\DeclareMathOperator{\obj}{\mathbf{Obj}}
\renewcommand{\Pr}{\mathbb{P}}
        \newcommand{\itemref}[1]{\eqref{#1}}
        \theoremstyle{plain}
        \newtheorem{thm}{Theorem}[section]
        \newtheorem{cor}[thm]{Corollary}
        \newtheorem{lem}[thm]{Lemma}
        \newtheorem{prop}[thm]{Proposition}
        \newtheorem{mainthms}{Theorem}
        \theoremstyle{definition}
        \newtheorem{defn}[thm]{Definition}
        \newtheorem{ex}[thm]{Example}
        \theoremstyle{remark}
        \newtheorem{rem}[thm]{Remark}
        \numberwithin{equation}{section}
\newcommand{\SEC}[2]{\underline{\mathrm{Sec}}_{{#1}}({#2})} 
\newcommand{\Sec}[2]{\mathrm{Sec}_{{#1}}({#2})}
\newcommand{\FMLSCHABS}{\mathbf{FSch}}
\newcommand{\ACHABS}{\mathbf{AlgStk}}
\newcommand{\SCHABS}{\mathbf{Sch}}
\newcommand{\COH}[1]{\mathbf{Coh}\,({#1})}
\newcommand{\COHP}[2]{\mathbf{Coh}_{\mathrm{p}/{#2}}({#1})}
\newcommand{\HS}[1]{\underline{\mathrm{HS}}_{{#1}}}
\newcommand{\SCH}[1]{\SCHABS/{#1}}
\newcommand{\st}[1]{\mathrm{St}_{{#1}}}
\newcommand{\ACH}[1]{\ACHABS/{#1}}
\newcommand{\qfs}[1]{\mathbf{QF}_{\mathrm{s}}({#1})}
\newcommand{\qfsfin}[1]{\mathbf{QF}_{\mathrm{s}}^{\mathrm{fin}}({#1})}
\newcommand{\RSCH}[1]{\mathbf{RSch}({#1})}
\newcommand{\RAFF}[1]{\mathbf{RAff}({#1})}
\newcommand{\lqfs}[1]{\mathbf{LQF}_{\mathrm{s}}({#1})}
\newcommand{\lqps}[1]{\mathbf{LQP}_{\mathrm{s}}({#1})}
\newcommand{\qfp}[2]{\mathbf{QF}_{\mathrm{s,qp}/{#2}}({#1})}
\newcommand{\qfc}[2]{\mathbf{QF}_{\mathrm{p}/{#2}}({#1})}
\newcommand{\lqf}[2]{\mathbf{LQF}({#1})}
\newcommand{\lqfc}[2]{\mathbf{LQF}_{\mathrm{p}/{#2}}({#1})}
\newcommand{\BETSITE}[1]{({#1})_{\mathrm{\acute{E}t}}}
\newcommand{\SH}[1]{\mathbf{Sh}({#1})}
\newcommand{\HILB}[1]{\underline{\mathrm{Hilb}}_{#1}} 
\newcommand{\HOM}{\mathrm{HOM}}
\newcommand{\FMLSCH}[1]{\FMLSCHABS/{#1}}
\newcommand{\fml}[1]{\mathfrak{{#1}}}
\newcommand{\cmpl}[1]{\widehat{{#1}}}
\newcommand{\HSM}[1]{\HS{{#1}}^{\mathrm{mono}}}
    \title{The Hilbert Stack}
    \author[J. Hall]{Jack Hall}
    \address{Department of Mathematics\\KTH Royal Institute of
      Technology\\SE-100 44 Stockholm\\Sweden}
    \email{jackhall@math.kth.se}
    \author[D. Rydh]{David Rydh}
    \address{Department of Mathematics\\KTH Royal Institute of
      Technology\\SE-100 44 Stockholm\\Sweden}
    \email{dary@math.kth.se}
    \thanks{We would like to 
      sincerely thank Brian Conrad, Jacob Lurie, Martin Olsson, Jason
      Starr, and Ravi Vakil for their comments and suggestions. We would also like to express our gratitude to the referee for their careful reading and excellent suggestions.}
    \date{2013-06-20}
\begin{document}
\begin{abstract}
    Let $\pi \colon X \to S$ be a morphism of algebraic stacks that is locally of finite presentation with affine stabilizers. We prove that there is an algebraic $S$-stack---the Hilbert stack---parameterizing proper algebraic stacks mapping quasi-finitely to $X$. This was previously unknown, even for a morphism of schemes.
\end{abstract}
  \subjclass[2010]{Primary 14C05; Secondary 14A20, 14D15, 14D23}
  \keywords{Hilbert stack, non-separated, pushouts, Generalized Stein
    factorizations}
\maketitle


\section*{Introduction}\label{sec:intro_hs}
Let $\pi \colon X \to S$ be a morphism of algebraic stacks.
Define the \textbf{Hilbert stack}, $\HS{X/S}$, to be
the $S$-stack that sends an $S$-scheme $T$ to the groupoid of
quasi-finite and representable morphisms $(Z \xrightarrow{s} X\times_S
T)$, such that the composition $Z \xrightarrow{s} X\times_S T
\xrightarrow{\pi_T} T$ is proper,  flat, and of finite presentation.

Let $\HSM{X/S}\subset\HS{X/S}$ be the $S$-substack whose objects are those $(Z \xrightarrow{s} X\times_S T)$ such that $s$ is a monomorphism. The
main results of this paper are as follows. 
\begin{mainthms}\label{thm:nonex_h}
  Let $\pi \colon X \to S$ be a \emph{non-separated} morphism of noetherian algebraic stacks. 
  Then $\HSM{X/S}$ is \emph{never} an algebraic stack.  
\end{mainthms}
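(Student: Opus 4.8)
Suppose, for contradiction, that $\HSM{X/S}$ is algebraic. The plan is to show that this forces $\Delta_{X/S}\colon X\to X\times_S X$ to be a closed immersion, i.e.\ that $X\to S$ is separated, contrary to hypothesis. First I would record two soft properties. Since the degree of $Z\to T$ is locally constant, $\HSM{X/S}=\coprod_{d\ge 0}\mathcal H_{d}$ is a disjoint union of open-and-closed substacks; as a monomorphism into $X\times_S T$ has no nontrivial automorphisms over its target, the diagonal of $\HSM{X/S}$ is a monomorphism and each $\mathcal H_{d}$ is in fact an algebraic \emph{space}. Moreover $\HSM{X/S}$ is limit preserving: the Hilbert stack of the finitely presented morphism $X\to S$ is limit preserving by standard spreading-out, and ``$Z\hookrightarrow X\times_S T$ is a monomorphism'' is a finitely presented condition (the diagonal of $Z$ over $X\times_S T$ being an isomorphism spreads out). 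Since an algebraic stack that is limit preserving is locally of finite presentation over $S$, each $\mathcal H_{d}$ is locally of finite type over $S$.

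I would then concentrate on degree two. The locus where the universal $Z\to T$ is finite étale is open, so it defines an open algebraic subspace $\mathcal V\subseteq\mathcal H_{2}$, again locally of finite type over $S$. Over $\mathcal V$ the two sheets of $Z$ may be ordered after an étale double cover; concretely, the $\Sigma_2$-torsor over $\mathcal V$ of orderings of the two sheets is the subfunctor
\[
  U \;=\; \bigl\{\, h\colon T\to X\times_S X \ \big|\ h^{-1}(\Delta_{X/S})=\emptyset \,\bigr\}\ \subseteq\ X\times_S X,
\]
that is, the functor of ordered pairs of everywhere-disjoint sections $(s_1,s_2)\colon T\to X$; conversely such a disjoint pair glues to a degree-two finite étale monomorphism. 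Since $\mathcal V$ is an algebraic space and $U\to\mathcal V$ is finite étale, $U$ is an algebraic space, locally of finite type over $S$, and the tautological inclusion $j\colon U\to X\times_S X$ is a monomorphism. As $X\times_S X$ is also locally of finite type over $S$, the morphism $j$ is locally of finite presentation.

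The decisive step is that $j$ depends only on underlying spaces: the condition $h^{-1}(\Delta_{X/S})=\emptyset$ is unchanged under square-zero (indeed any nilpotent) thickenings of $T$, since it refers only to the map of topological spaces $|h|$. Hence $j$ is formally étale, and being also locally of finite presentation it is étale. An étale monomorphism is an open immersion, so $U$ is an open subscheme of $X\times_S X$ whose complement is exactly $\Delta_{X/S}$. Therefore $\Delta_{X/S}$ is closed, $X\to S$ is separated, and we reach the desired contradiction.

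The main obstacle is precisely the promotion of $j$ from \emph{formally} étale to \emph{étale}, which is where local finite presentation of the hypothetically algebraic $\HSM{X/S}$ is essential and where non-separatedness genuinely bites. Without it the subfunctor $U$ is only a formal/ind-object: already for the line with doubled origin mapping to $\mathbb{A}^1$ by the identity, $U$ is a pair of points over the \emph{formal} neighbourhood of the origin, which is limit preserving but not an algebraic space. The argument must therefore route through limit-preservation of $\HSM{X/S}$ and the equivalence ``limit preserving $+$ algebraic $\Rightarrow$ locally of finite presentation'' in order to force the contradiction. A secondary point requiring care is the identification of the ordering torsor of $\mathcal V$ with $U$ when $X$ is a genuine algebraic stack, where ``disjoint pair of sections'' must be interpreted $2$-categorically.
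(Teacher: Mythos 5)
Your proof takes a genuinely different route from the paper's. The paper offers essentially no argument of its own for Theorem~\ref{thm:nonex_h}: it exhibits the degenerating ``doubled point'' $\spec \Bbbk\llbracket x\rrbracket \amalg \spec \Bbbk\llbracket x\rrbracket \xrightarrow{T_s\amalg T_t} (\Aff^1_s\amalg_{\Aff^1-(0)}\Aff^1_t)\times_\Bbbk\spec \Bbbk\llbracket x\rrbracket$, which is a closed immersion (hence a monomorphism) modulo every power of $x$ but fails to be a monomorphism generically, and then invokes the proof of \cite[Thm.\ 2.6]{MR2369042}: the resulting compatible family of objects of $\HSM{X/S}$ over the $\spec \Bbbk[x]/(x^{n+1})$ admits no effectivization over $\spec\Bbbk\llbracket x\rrbracket$ (the only candidate, $T_s\amalg T_t$, is not a monomorphism), contradicting the effectivity of formal objects that every algebraic stack enjoys. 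So the paper's obstruction is Artin's effectivity condition over complete local rings, whereas yours is ``algebraic $+$ limit preserving $\Rightarrow$ locally of finite presentation'' combined with the upgrade of a formally \'etale, locally finitely presented monomorphism to an open immersion. Your identification of the ordering torsor of the degree-two finite \'etale locus with the functor $U$ of disjoint pairs of sections, and the observation that $U\to X\times_S X$ is formally \'etale because emptiness of $h^{-1}(\Delta_{X/S})$ is insensitive to nilpotent thickenings, are both correct; for a morphism of \emph{schemes}, where $\Delta_{X/S}$ is an immersion and so closedness of $\im|\Delta_{X/S}|$ is equivalent to separatedness, your argument is complete modulo routine verifications, and it avoids choosing a DVR witnessing non-separatedness.

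The genuine gap is in the final line. For algebraic stacks ``separated'' means $\Delta_{X/S}$ is proper, and closedness of the \emph{image} of $|\Delta_{X/S}|$ is strictly weaker: for $X=\mathrm{B}\mathbb{G}_a\to\spec k$ the image of $|\Delta|$ is all of $|X\times_k X|$, hence closed, and your argument yields no contradiction. (Even for algebraic spaces one needs $\Delta$ to be an immersion, i.e.\ $X\to S$ locally separated, to pass from ``closed image'' to ``closed immersion''.) This is not entirely a defect of your argument: for $X=\mathrm{B}\mathbb{G}_a$ every quasi-finite representable monomorphism $Z\to \mathrm{B}\mathbb{G}_a\times T$ with $Z\to T$ proper with finite diagonal is empty, so $\HSM{X/S}\cong\spec k$ \emph{is} algebraic and the theorem as literally stated already fails there; the paper's sketch breaks at the same point, since $f\amalg g$ is not a monomorphism when the branches carry positive-dimensional automorphism groups. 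Both proofs really establish the result when non-separatedness is witnessed by the diagonal having non-closed image, which for schemes is always the case. Two smaller points to record: limit preservation of $\HSM{X/S}$ requires $X\to S$ to be locally of finite presentation, which is not among the stated hypotheses; and the cancellation giving local finite presentation of $j$ uses that $X\times_S X$ is itself locally of finite type over $S$.
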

\begin{mainthms}\label{thm:main}
  Let $\pi\colon X \to S$ be a morphism of algebraic stacks that is locally of finite presentation, with quasi-compact and separated diagonal, and affine stabilizers. 
  Then $\HS{X/S}$ is an algebraic stack, locally of finite presentation over $S$, with quasi-affine diagonal over~$S$.  
\end{mainthms}
\begin{mainthms}\label{thm:main2}
  Let $X \to S$ be a morphism of algebraic stacks that is locally of finite presentation, with quasi-finite and separated diagonal.
  Let $Z \to S$ be a morphism of algebraic stacks that is proper, flat, and of finite presentation with finite diagonal.
  Then the  $S$-stack $T \mapsto \HOM_T(Z\times_S T,X\times_S T)$ is  algebraic, locally of finite presentation over $S$, with quasi-affine  diagonal over $S$.
\end{mainthms}
Let $f \colon Y\to Z$ and $p \colon Z \to W$ be morphisms of stacks. 
Define the fibered category $p_*Y$, the \textbf{restriction of scalars of $Y$ along $p$}, by $(p_*Y)(T) = Y(T\times_W Z)$. 
\begin{mainthms}\label{thm:rest}
  Let $f \colon Y \to Z$ and $p \colon Z \to W$ be morphisms of algebraic
  stacks. Assume that $p$ is proper, flat, and of finite presentation with
  finite diagonal and that $f$ is locally of finite presentation with
  quasi-finite and separated diagonal. Then the restriction of scalars $p_* Y$
  is an algebraic stack, locally of finite presentation over
  $W$, with quasi-affine diagonal over $W$.
\end{mainthms}
Theorem \ref{thm:nonex_h} is similar to the main conclusion of
\cite{MR2369042}, and is included for completeness. In the
case that the morphism $\pi \colon X \to S$ is \emph{separated}, the
Hilbert stack, $\HS{X/S}$, is equivalent to the stack of properly
supported algebras on $X$, which was shown to be algebraic in
\cite{MR2233719}. Thus the new content of this paper is in the
removal of separatedness assumptions from similar theorems in the
existing literature. The statement of Theorem 
\ref{thm:main} for algebraic spaces appeared in
\cite[Appendix~1]{MR0399094}, but was left unproved due to a lack
of foundational results. It is
important to note that Theorems \ref{thm:main}, \ref{thm:main2}, and
\ref{thm:rest} are completely new, even for schemes and algebraic
spaces. 

We wish to point out that if $X$ is an algebraic $S$-stack
with affine stabilizer groups, and $X^0 \subset X$
denotes the open locus where the inertia stack $I_{X/S}$ is
quasi-finite, then there is an isomorphism of $S$-stacks $\HS{X_0/S}
\to \HS{X/S}$. In particular, Theorem \ref{thm:main} is really about algebraic stacks with quasi-finite diagonals. 

Theorems \ref{thm:main2} and \ref{thm:rest} generalize \cite[Thm.\
1.1 \& 1.5]{MR2239345} and \cite{MR2194377,MR2258535} to the non-separated
setting, and follow easily from Theorem
\ref{thm:main}. In the case where $\pi$ is not
flat, as was 
remarked in \cite{hallj_openness_coh}, Artin's Criterion is
difficult to apply. Thus, to 
prove Theorem \ref{thm:main} we use the algebraicity criterion
[\emph{op.\ cit.}, Thm.\ A]. The results of [\emph{op.\ cit.}, \S9]
show that it is sufficient to 
understand how infinitesimal deformations can be extended to global
deformations (i.e.\ the effectivity of formal deformations).    

The difficulty in extending infinitesimal deformations of the Hilbert
stack lies in the dearth of ``formal GAGA'' type results---in the spirit of 
\cite[III.5]{EGA}---for \emph{non-separated} schemes,
algebraic spaces, and algebraic stacks. In this paper, we will prove a
generalization of formal GAGA to 
non-separated morphisms of algebraic 
stacks. The proof of our version of non-separated formal GAGA requires
the development of a number of foundational results on non-separated 
spaces, and forms the bulk of the paper. 
\subsection{Background}
The most fundamental moduli problem in algebraic geometry is the Hilbert
moduli problem for $\Pr^N_{\Z}$: find a scheme that parameterizes
flat families of closed subschemes of $\Pr^N_{\Z}$. It was proven by
Grothendieck \cite[IV.3.1]{MR0146040} that this moduli problem has a
solution which is a disjoint union of projective schemes.

In general, given a morphism of schemes $X \to S$, one may consider
the \textbf{Hilbert moduli problem}: find a scheme $\HILB{X/S}$ 
parameterizing flat families of \emph{closed} subschemes of $X$. It is
more precisely described by its functor of points: for any scheme 
$T$, a map of schemes $T \to \HILB{X/S}$ is equivalent to a diagram: 
\[
\xymatrix{Z  \ar@{^(->}[r] \ar[dr] & \ar[d] X\times_S T\\ & T,}
\]
where the morphism $Z \to X\times_S T$ is a closed immersion, and the
composition $Z \to T$ is proper, flat, and of finite
presentation. Grothendieck, using projective methods, constructed the
scheme $\HILB{\Pr^N_\Z/\Z}$. 

In \cite{MR0260746}, M. Artin developed a new approach to constructing
moduli spaces. It was proved, by M. Artin in \cite[Cor.\
6.2]{MR0260746} and \cite[Appendix]{MR0399094}, that the functor 
$\HILB{X/S}$ had the structure of 
an algebraic space for any \emph{separated} and 
locally finitely presented morphism of algebraic spaces $X \to S$. The
algebraic space  $\HILB{X/S}$ is not, in general, a scheme---even if
$X \to S$ is a proper morphism of smooth complex varieties. In more
recent work, Olsson--Starr \cite{MR2007396} and Olsson \cite{MR2183251}
showed that the functor $\HILB{X/S}$ is an algebraic space in the case
of a separated and locally finitely presented morphism of algebraic
stacks $X \to S$.  

A separatedness assumption on a scheme is rarely restrictive to an
algebraic geometer. Indeed, most schemes algebraic geometers are
interested in are quasi-projective or proper. Let us examine some
spaces that arise in the theory of moduli.
\begin{ex}[Picard Schemes]
  Let $C \to  \Aff^1$ be the family of curves corresponding to a
  conic degenerating to a node. Then the Picard scheme
  $\Pic_{C/\Aff^1}$, which parameterizes families of line bundles on 
  $C/\Aff^1$ modulo pullbacks from the base, is
  not separated. This is worked out in detail in \cite[Ex.\ 
  9.4.14]{MR2222646}. 
\end{ex}
\begin{ex}[Curves]
  Let $\mathcal{U}$ be the stack of \emph{all} curves. That is, a
  morphism $T \to \mathcal{U}$ from a scheme $T$, is equivalent to
  a morphism of algebraic spaces $C \to T$ that is proper, flat,
  finitely presented, and with one-dimensional fibers. In particular,
  $\mathcal{U}$ parameterizes all singular curves, which could be
  non-reduced and have many irreducible and connected components. In
  \cite[Appendix B]{smyth-2009}, it was shown that $\mathcal{U}$ is an
  algebraic stack, locally of finite presentation over $\Z$. The stack
  $\mathcal{U}$ is interesting, as Hassett \cite{MR1957831},
  Schubert \cite{MR1106299}, and Smyth \cite{smyth-2009} have
  constructed modular compactifications of $\mathcal{M}_g$, different
  from the classical Deligne--Mumford compactification
  \cite{MR0262240}, that are open substacks of $\mathcal{U}$. The
  algebraic stack $\mathcal{U}$ is not separated.  
\end{ex}
Unlike schemes, the norm for interesting moduli spaces is that they
are non-separated. Indeed, families of interesting geometric objects tend not to
have unique limits. This is precisely the reason why compactifying
moduli spaces is an active, and very difficult, area of  research. 

Lundkvist and Skjelnes showed in \cite{MR2369042} that for a
non-separated morphism of noetherian algebraic spaces $X \to S$, the functor
$\HILB{X/S}$ is \emph{never} an algebraic space. We
will provide an illustrative example of this phenomenon.  
\begin{ex}\label{ex:sl_nonsep_cl}
  Consider the simplest non-separated scheme: the line with the
  doubled origin. Let $\Bbbk$ be a field and set $S=\spec \Bbbk$. Let $X=\Aff^1_s
  \amalg_{\Aff^1_{s=t}-(0)} \Aff^1_t$, which we view as an $S$-scheme. 
  Now, for a $y$-line, $\Aff^1_y$, we have a $D:=\spec \Bbbk\llbracket x
  \rrbracket$-morphism $T_y \colon D\to \Aff^1_y\times_S D$ given by
  \[
  1\tensor x\mapsto x,\quad y\tensor 1 \mapsto x.
  \]
  Thus, we have an induced $D$-morphism:
  \[
  i \colon D \xrightarrow{T_s}
  (\Aff^1_s)\times_S D \to
  X \times_S D.   
  \]
  Now, the fiber $i_n$ of $i$ over $D_n:=\spec \Bbbk\llbracket
  x\rrbracket/(x^{n+1})$ is topologically the inclusion of one of the two
  origins, which is a closed immersion. Note, however, that the map $i$
  is not a closed immersion. The closed immersions $i_n \colon D_n \to
  X\times_S D_n$ induce compatible $S$-morphisms $D_n 
  \to \HILB{X/S}$. If $\HILB{X/S}$ is an algebraic space, then this
  data induces a unique $S$-morphism $D \to \HILB{X/S}$. That is,
  there exists a closed immersion $j \colon Z \to X\times_S D$ whose fiber over
  $D_n$ is $i_n$. One immediately deduces that $j$ is isomorphic to $i$.
  But $i$ is not a closed immersion, thus we have a contradiction. 
\end{ex}
Note that if $X \to S$ is separated, any monomorphism $Z \to X \times_S T$,
such that $Z \to T$ is proper, is \emph{automatically} a closed
immersion. Thus, for a separated morphism $X \to S$, the stack
$\HSM{X/S}$ is equivalent to the Hilbert functor 
$\HILB{X/S}$. In the case that the morphism $X \to S$ is
non-separated, they are different. Note that in Example
\ref{ex:sl_nonsep_cl}, the deformed object was still a monomorphism,
so will not prove Theorem \ref{thm:nonex_h} for the line with the
doubled-origin.  Let us consider another example.   
\begin{ex}
  Consider the line with doubled-origin again, and retain the
  notation and conventions of Example \ref{ex:sl_nonsep_cl}.  Thus, we
  have an induced map over $D$: 
  \begin{small}
    \[
    D\amalg D \xrightarrow{T_s \amalg T_t} (\Aff^1_s \amalg
    \Aff^1_t)\times_S D\to
    X \times_S D.
    \]
  \end{small}
  \vspace{-6pt}
  {\par\noindent}Where $x=0$, this becomes the inclusion of the doubled
  point, which is a closed immersion. Where $x\neq 0$, this becomes
  non-monomorphic.
\end{ex}
The proof of \cite[Thm.\ 2.6]{MR2369042} is based upon
Example~\ref{ex:sl_nonsep_cl}. Arguing similarly, but with the last example,
one readily obtains Theorem
\ref{thm:nonex_h}. Thus, for a non-separated morphism of schemes, the
obstruction to the existence of a Hilbert scheme is that a
monomorphism $Z \hookrightarrow X$ can deform to a
non-monomorphism. So, one is forced to parameterize non-monomorphic \emph{maps} $Z 
\to X$. Such variants of the Hilbert moduli problem have been considered previously in the literature. We now list those variants that the authors are aware of at the time of publication and that are known to be algebraic.
\begin{itemize}
\item Vistoli's Hilbert stack, which parameterizes families of finite and unramified morphisms to a separated stack \cite{MR1138256}. 
\item The stack of coherent algebras on a separated algebraic stack \cite{MR2233719}.
\item The stack of branchvarieties, which parameterizes geometrically reduced
  algebraic stacks mapping finitely to a separated algebraic stack. It has
  proper components when the target stack has projective coarse moduli space or
  admits a proper flat cover by a quasi-projective scheme
  \cite{MR2608190,MR2233719}.
\item There is a proper algebraic
  space parameterizing Cohen--Macaulay curves with fixed Hilbert
  polynomial mapping finitely, and birationally onto its image, to
  projective space \cite{honsen_phd}.
\item The Hilbert stack of \emph{points} for any morphism of algebraic stacks \cite{MR2821738}. 
\end{itemize}
To subsume the variants of the Hilbert moduli problem listed above,
we parameterize the quasi-finite morphisms $Z \to X$. 
\begin{ex}
  Closed immersions, quasi-compact open immersions, quasi-compact
  unramified morphisms, and finite morphisms are all examples of
  quasi-finite morphisms. By Zariski's Main Theorem \cite[IV.18.12.13]{EGA}, any quasi-finite and separated map of schemes 
  $Z \to X$ factors as $Z \to \bar{Z} \to X$ where $Z \to \bar{Z}$ is
  an open immersion and $\bar{Z} \to X$ is finite. 
\end{ex}
We now define the \textbf{Generalized Hilbert moduli problem}: for a
morphism of algebraic stacks $\pi \colon X \to S$, find an
algebraic stack $\HS{X/S}$ such that a map $T \to \HS{X/S}$ is
equivalent to the data of a quasi-finite and representable map $Z \to
X\times_S T$, with the composition $Z\to T$ proper, flat, and of finite 
presentation. This is the fibered category that appears in Theorem
\ref{thm:main}. The main result in this paper, Theorem \ref{thm:main},
is that this stack is algebraic.  
\begin{ex}
  Let $X \to S$ be a separated morphism of algebraic stacks. Then every
  quasi-finite and separated morphism
  $Z \to X$, such that $Z \to S$ proper, is finite. Hence,
  $\HS{X/S}$ is the stack of properly supported coherent algebras on
  $X$. Lieblich \cite{MR2233719} showed that $\HS{X/S}$ is 
  algebraic whenever $X \to S$ is locally of finite
  presentation and separated.
\end{ex}
\subsection{Outline}\label{subsec:outline}
In \S\ref{ch:hilbstk}, we will prove Theorem  \ref{thm:main} using the
algebraicity criterion \cite[Thm.\ A]{hallj_openness_coh}. To apply
this criterion, like Artin's Criterion \cite[Thm.\ 5.3]{MR0399094}, it
is necessary to know that  formally versal deformations of objects in
the Hilbert stack can be  
effectivized. Note that effectivity results for moduli problems related 
to separated  objects usually follow from the formal GAGA results of
\cite[III.5]{EGA}, and the relevant generalizations to algebraic
stacks \cite{MR2007396,MR2183251}. Since 
we are concerned with non-separated objects, no previously published
effectivity result applies.

In \S\ref{ch:exthm}, we prove a generalization of formal
GAGA for non-separated algebraic stacks. This is the main technical
result of this paper. To be precise, let $R$ be a noetherian ring that
is separated and complete for the topology defined by an ideal $I \subseteq R$ (i.e., $R$ is $I$-adic \cite[$0_{\mathrm{I}}$.7.1.9]{EGA}). Set $S= \spec R$ and for each $n\geq 0$ let $S_n = \spec (R/I^{n+1})$.
Now let $\pi \colon X \to
S$ be a morphism of algebraic stacks that is locally of finite type, with quasi-compact and separated
diagonal, and affine stabilizers. For each $n\geq 0$ let 
$\pi_n \colon X_n
\to S_n$ denote the pullback of the map $\pi$ along the closed immersion
$S_n \hookrightarrow S$. Suppose that for each $n\geq 0$, we have
compatible quasi-finite $S_n$-morphisms $s_n \colon Z_n \to X_n$ such that 
the composition $\pi_n \circ s_n \colon Z_n \to S_n$ is proper. We show (Theorem \ref{thm:exthm}) that there exists a unique, quasi-finite
$S$-morphism $s \colon Z \to X$, such that the composition $\pi \circ s \colon Z
\to S$ is proper, and that there are compatible
$X_n$-isomorphisms $Z\times_X X_n \to Z_n$.  

In \S\S\ref{ch:coeq}-\ref{ch:qfs}, we will develop techniques to
prove the afforementioned effectivity result in \S\ref{ch:exthm}. To motivate these
techniques, it is instructive to explain part of
Grothendieck's proof of formal GAGA for properly supported
coherent sheaves \cite[III.5.1.4]{EGA}. 

So, let $R$ be an $I$-adic noetherian ring, let $S=\spec R$, and for each $n\geq 0$ let $S_n = \spec
(R/I^{n+1})$. For a morphism of schemes $f \colon Y \to S$ that is locally
of finite type and \emph{separated}, let 
$f_n \colon Y_n \to S_n$ denote the pullback of the morphism
$f$ along the closed immersion $S_n \hookrightarrow S$. Suppose that
for each $n\geq 0$, we have a coherent $Y_n$-sheaf $\Func{n}$,
properly supported over $S_n$, together with 
isomorphisms $\Func{n+1}|_{Y_n} \cong \Func{n}$.
Then Grothendieck's formal GAGA \cite[III.5.1.4]{EGA} states that there is a unique coherent
$Y$-sheaf $\Func{}$, with support proper over $S$, such that
$\Func{}|_{Y_n} \cong \Func{n}$. 

For various reasons, it is better to think of adic systems of coherent
sheaves $\{\Func{n}\}$ as  a
coherent sheaf $\fml{F}$ on the formal scheme $\cmpl{Y}$ \cite[I.10.11.3]{EGA}. We say that a coherent sheaf $\fml{F}$ on
the formal scheme $\cmpl{Y}$ is \textbf{effectivizable}, if there exists a
coherent sheaf $\Func{}$
on the scheme $Y$ and an isomorphism of coherent sheaves
$\cmpl{\Func{}} \cong \fml{F}$ on the formal scheme $\cmpl{Y}$. The
effectivity problem
is thus recast as: any coherent sheaf $\fml{F}$ on the formal scheme
$\cmpl{Y}$, with support proper over $\cmpl{S}$, is effectivizable. This is proven using the method of
\emph{d\'evissage} on the abelian category of coherent sheaves with
proper support,
$\COHP{Y}{S}$, on
the scheme $Y$. The proof 
consists of the following sequence of observations. 
\begin{enumerate}
\item \label{dev:step_ext} Given coherent sheaves
  $\mathscr{H}$ and $\mathscr{H}'$ on $Y$, with $\mathscr{H}'$
  properly supported over $S$, the natural map of $R$-modules:
  \[
  \Ext^i_{\Orb_Y}(\mathscr{H},\mathscr{H}') \to
  \Ext^i_{\Orb_{\cmpl{Y}}}(\cmpl{\mathscr{H}},\cmpl{\mathscr{H}}')
  \]
  is an isomorphism for all $i\geq 0$. In particular, the ``$i=0$'' statement shows that the functor
  $\COHP{Y}{S} \to \COHP{\cmpl{Y}}{S}$ is fully faithful.
\item \label{dev:step_dev_qc} By \itemref{dev:step_ext}, it remains to
  prove that the functor $\COHP{Y}{S} \to \COHP{\cmpl{Y}}{S}$ is
  essentially surjective. It is sufficient to prove this essential
  surjectivity when $Y$ is quasi-compact. By noetherian induction on
  $Y$, we may, in addition, assume that if $\fml{F} \in
  \COHP{\cmpl{Y}}{S}$ is annihilated by some coherent ideal
  $J\subseteq \Orb_Y$ with $|\supp(\Orb_Y/J)| \subsetneq |Y|$, then
  $\fml{F}$ is effectivizable.
\item \label{dev:step_2o3} If we have an exact sequence of
  $\cmpl{S}$-properly supported coherent sheaves on 
  $\cmpl{Y}$:
  \[
  \xymatrix{0 \ar[r] & \fml{F}' \ar[r] & \fml{F} \ar[r] & \fml{F}''
    \ar[r] & 0}
  \]
  and two of $\fml{F}'$, $\fml{F}$, $\fml{F}''$ are effectivizable,
  then the third is. This follows from the exactness of completion and
  the $i=0$,$1$ statements of \itemref{dev:step_ext}.
\item \label{dev:step_bir} Combine \itemref{dev:step_dev_qc} and
  \itemref{dev:step_2o3} to deduce that if $\alpha \colon \fml{F} \to
  \fml{F}'$ is a morphism in $\COHP{\cmpl{Y}}{S}$  such that
  $\fml{F}'$ is effectivizable and $\ker \alpha$ and $\coker
  \alpha$ are annihilated by some  coherent ideal
  $J\subseteq \Orb_Y$ with $|\supp(\Orb_Y/J)| \subsetneq |Y|$, then
  $\fml{F}$ is effectivizable.
\item \label{dev:step_proj} The result is true for quasi-projective morphisms
  $Y \to S$. This is proved via a direct argument. 
\item \label{dev:step_chow} The Chow Lemma \cite[II.5.6.1]{EGA} gives
  a quasi-projective $S$-scheme $Y'$ and a projective $S$-morphism $p\colon Y' \to Y$ that is an isomorphism over a dense open subset $U$ of $Y$.
\item \label{dev:step_pb} Use  \itemref{dev:step_proj} for the
  quasi-projective morphism $Y'\to S$ to 
  show that $\cmpl{p}^*\fml{F} \cong \cmpl{\mathscr{G}}$ for some 
  $S$-properly supported coherent sheaf $\mathscr{G}$ on $Y'$.
\item \label{dev:step_ff} Use the Theorem on Formal Functions
  \cite[III.4.1.5]{EGA} to show that 
  $(p_*\mathscr{G})^\wedge \cong \cmpl{p}_*\cmpl{p}^*\fml{F}$. Thus
  $\cmpl{p}_*\cmpl{p}^*\fml{F}$ is effectivizable with $\cmpl{S}$-proper
  support. 
\item \label{dev:step_adj} Note that we have an adjunction morphism $\eta
  \colon \fml{F} \to \cmpl{p}_*\cmpl{p}^*\fml{F}$. Pick a coherent ideal $J
  \subseteq \Orb_Y$ defining the complement of $U$. By
  \cite[III.5.3.4]{EGA}, we may choose $J$ so that it annihilates the
  kernel and cokernel of $\eta$. By \itemref{dev:step_bir}, $\fml{F}$
  is effectivizable, and the result follows. 
\end{enumerate}
The proof of the non-separated effectivity result in \S\ref{ch:exthm} will be
very similar to the technique outlined above, once the steps are
appropriately reinterpreted. For a non-separated morphism of schemes
$X \to S$, instead of the abelian category $\COH{X}$ (resp.~$\COHP{X}{S}$), we consider the
non-abelian category $\qfs{X}$ (resp.~$\qfc{X}{S}$) that consists of quasi-finite and
separated morphisms $Z \to X$ (resp.~quasi-finite and separated
morphism $Z \to X$ such that $Z \to S$ is proper). Instead of extensions,
which do not make sense in a non-abelian category, we will use finite limits.

In 
\S\ref{ch:coeq}, we will show that $\qfs{X}$ is closed under finite
limits along finite morphisms. In \S\ref{ch:coeq} we also
reinterpret \itemref{dev:step_ext} and 
\itemref{dev:step_2o3} in terms of the preservation of these
finite limits under completion. The
analogue of quasi-projective morphisms $Y \to S$ in \itemref{dev:step_proj}
and \itemref{dev:step_pb} are morphisms that factor as $Y \to Y' \to
S$, where $Y \to Y'$ is 
\'etale, and $Y' \to S$ is projective---note that it is essential that
we do not assume that $Y \to Y'$ is separated. The analogue of the Chow Lemma used in \itemref{dev:step_chow},
is a generalization, for non-separated schemes and algebraic spaces,
due to Raynaud--Gruson \cite[Cor.\  
5.7.13]{MR0308104}. In \S\ref{ch:qfs}, for a proper morphism $q \colon Y'
\to Y$,  we construct an adjoint pair $(q_!,q^*) \colon \qfs{Y'} \leftrightarrows
\qfs{Y}$ which takes the role of the adjoint pair $(q^*,q_*) \colon \COH{Y}
\leftrightarrows \COH{Y'}$. We also show that the adjoint pair can be 
constructed for locally noetherian formal schemes, and prove an
analogue of \itemref{dev:step_ff} in this
setting. In \S\ref{ch:exthm}, we will combine the results
of \S\S\ref{ch:coeq} and \ref{ch:qfs} to obtain analogues of
\itemref{dev:step_bir} and \cite[III.5.3.4]{EGA} in order to complete
the analogue of step \itemref{dev:step_adj}.

In \S\ref{ch:stein_factorizations}, we introduce the generalized Stein
factorization: every separated morphism of finite type with proper fibers
factors canonically as a proper Stein morphism followed by a quasi-finite
morphism. The adjoint pair of \S\ref{ch:qfs} is an immediate consequence of the
existence of the generalized Stein factorization.

\subsection{Notation}\label{sec:notn}
We introduce some notation here that will be used throughout the
paper. For a category $\Coll$ and $X \in \obj\Coll$, we have the
\textbf{slice} category $\Coll/X$, with objects the morphisms $V \to
X$ in $\Coll$, and morphisms commuting diagrams over $X$, which are
called $X$-morphisms. If the category $\Coll$ has finite limits and $f \colon Y
\to X$ is a morphism in $\Coll$, then for $(V\to X) \in
\obj(\Coll/X)$, define $V_Y := V \times_X Y$. Given a morphism $p \colon V' 
\to V$ in $\Coll/X$ there is an induced morphism $p_Y \colon V'_Y \to V_Y$
in $\Coll/Y$. There is an
induced functor $f^* \colon \Coll/X \to \Coll/Y$ given by
$(V \to X) \mapsto (V_Y \to Y)$. 

Given a ringed space $U := (|U|,\Orb_U)$, a sheaf of ideals
$\mathscr{I} \subseteq \Orb_U$, and a morphism of ringed spaces $g \colon V
\to U$, we define the \textbf{pulled back ideal} $\mathscr{I}_V = 
\im(g^*\mathscr{I} \to \Orb_V)\subseteq \Orb_V$. 

Let $S$ be a scheme. An algebraic $S$-space is a sheaf $F$ on the big
\'etale site $\BETSITE{\SCH{S}}$, such that the diagonal morphism
$\Delta_F \colon F \to F \times_S F$ is represented by schemes, and there
is a smooth surjection $U \to F$ from an $S$-scheme $U$. An
algebraic $S$-stack is a stack $H$ on $\BETSITE{\SCH{S}}$, such that
the diagonal morphism $\Delta_H \colon H \to H\times_S H$ is represented
by algebraic $S$-spaces and there is a smooth surjection $U \to H$
from an algebraic $S$-space $U$. A priori, we make no separation
assumptions on our algebraic stacks.  We do show, however, that all
algebraic stacks figuring in this paper possess quasi-compact and
separated diagonals. Thus all of the results of \cite{MR1771927}
apply. We denote the $(2,1)$-category of algebraic stacks by $\ACHABS$. 


\section{Finite colimits of quasi-finite morphisms} \label{ch:coeq}
In this section we will prove that the
colimit of a finite diagram of finite morphisms between quasi-finite
objects over an algebraic stack exists
(Theorem~\ref{thm:pushouts}). We will also prove that these colimits are
compatible with completions (Theorem~\ref{thm:fml_pushouts_commute} and
Corollary~\ref{cor:fml_pushouts_commute_stks}).

Let $X$ be an algebraic stack. Denote the $2$-category of algebraic
stacks over $X$ by $\ACH{X}$. Define the full
$2$-subcategory $\RSCH{X}\subset \ACH{X}$ to have those objects $Y
\xrightarrow{s} X$, where the morphism $s$ is schematic. We now make
three simple observations.
\begin{enumerate}
\item The $1$-morphisms in $\RSCH{X}$ are schematic.
\item Automorphisms of $1$-morphisms in $\RSCH{X}$ are trivial. Thus $\RSCH{X}$ is naturally $2$-equivalent to a $1$-category.
\item If the algebraic stack $X$ is a scheme, then the natural functor
  $\SCH{X} \to \RSCH{X}$ is an equivalence of categories.  
\end{enumerate}
\begin{defn}\label{defn:qf}
  Let $X$ be an algebraic stack. Define the following full
  subcategories of $\RSCH{X}$:
  \begin{enumerate}
  \item $\RAFF{X}$ has objects the affine morphisms to $X$; 
  \item $\qfs{X}$ has objects the quasi-finite and separated maps to
    $X$. 
  \end{enumerate}
  We also let $\qfsfin{X}\subseteq \qfs{X}$ denote the subcategory
  that has the same objects as $\qfs{X}$ but only has the morphisms
  $f \colon (Z \xrightarrow{s} X) \to (Z' \xrightarrow{s'} X)$ such that
  $Z\to Z'$ is finite. This is not a full subcategory.
\end{defn}
Recall that quasi-finite, separated, and representable morphisms of
algebraic stacks are schematic~\cite[Thm.~A.2]{MR1771927}. The
subcategories $\qfs{X}$, $\RAFF{X}$ and $\RSCH{X}$ have all finite limits
and these coincide with the limits in $\ACH{X}$. The subcategory $\qfsfin{X}$
has fiber products and these coincide with the fiber products in $\ACH{X}$.
However, there is not a final object in $\qfsfin{X}$,
except when every quasi-finite morphism to $X$ is finite.
\subsection{Algebraic Stacks}\label{sec:coeq_sch}
For the moment, we will be primarily concerned with the existence
of colimits in the category $\qfs{X}$, where $X$ is a locally
noetherian algebraic stack. Colimits in algebraic geometry are usually 
subtle, so we restrict our attention to finite colimits in $\qfsfin{X}$.
This includes the two types that will be useful in
\S\ref{ch:exthm}---pushouts of two finite morphisms and coequalizers of
two finite morphisms. We will, however, pay
attention to some more general 
types of colimits in the $2$-category of algebraic stacks, as the added
flexibility will simplify the exposition.
\begin{rem}
  Note that because we have a fully faithful embedding of categories
  $\qfs{X} \subset \RSCH{X}$, a categorical colimit in $\RSCH{X}$,
  that lies in $\qfs{X}$, is automatically a categorical colimit in
  $\qfs{X}$. This will happen frequently in this section.
  Moreover, if a finite diagram $\{Z_i\}_{i\in I}$ in $\qfsfin{X}$ has a
  categorical colimit $Z$ in $\qfs{X}$ and $Z_i\to Z$ is finite for every $i\in
  I$, then $Z$ is also a categorical colimit in $\qfsfin{X}$. Indeed, it is
  readily seen that $\coprod_{i\in I}Z_i\to Z$ is surjective so any morphism
  $(Z\to W)\in \qfs{X}$, such that $Z_i\to Z\to W$ is finite for every $i\in
  I$, is finite.
\end{rem}
\begin{rem}\label{rem:algstk_col}
  It is important to observe that a categorical colimit in $\ACH{X}$
  is, in general, different from a categorical colimit in
  $\RSCH{X}$. Indeed, let $X = \spec \Bbbk$, and let $G$ be a
  finite group, then the $X$-stack $\mathrm{B}G$ is the colimit in
  $\ACH{X}$ of the diagram $[G_X \rightrightarrows X]$. The colimit of
  this diagram in
  the category $\RSCH{X}$ is just $X$. 
\end{rem}
The definitions that follow are closely related to those given in
\cite[Def.~2.2]{Rydh:2007p231}. Recall that a map of topological spaces $g
\colon U \to V$ is \textbf{submersive} if a subset $Z\subseteq |V|$ is open
if and only if $g^{-1}(Z)$ is an open subset of $|U|$.
\begin{defn}
  Consider a diagram of algebraic stacks $\{Z_i\}_{i\in I}$, an
  algebraic stack $Z$, and suppose that we have compatible maps
  $\phi_i \colon Z_i \to Z$ for all $i\in I$. Then we say that the data $(Z,\{\phi_i\}_{i\in I})$ is a
  \begin{enumerate}
  \item \textbf{Zariski colimit} if the induced map on topological
    spaces $\phi \colon\varinjlim_i |Z_i| \to |Z|$ is a homeomorphism
    (equivalently, the map $\amalg_{i\in I} \phi_i \colon \amalg_{i\in I}
    Z_i \to Z$ is submersive and the map $\phi$ is a bijection of sets);  
  \item \textbf{weak geometric colimit} if it is a Zariski colimit of
    the diagram $\{Z_i\}_{i\in I}$, and the canonical map of
    lisse-\'etale sheaves of rings $\Orb_Z \to
    \varprojlim_i (\phi_i)_*\Orb_{Z_i}$ 
    is an isomorphism;   
  \item \textbf{universal Zariski colimit}  if for any algebraic
    $Z$-stack $Y$,  the data $(Y, \{(\phi_i)_Y\}_{i\in I})$ is
    a Zariski colimit of the diagram $\{(Z_i)_Y\}_{i\in I}$;
  \item \textbf{geometric colimit} if it is a
    universal Zariski and weak geometric colimit of the diagram
    $\{Z_i\}_{i\in I}$;
  \item \textbf{uniform geometric colimit} if for any \emph{flat},
    algebraic $Z$-stack $Y$, the data $(Y,\{(\phi_i)_Y\}_{i\in
      I})$ is a geometric colimit of the diagram
    $\{(Z_i)_Y\}_{i\in I}$. 
  \end{enumerate}
\end{defn}
The exactness of flat pullback of sheaves and flat base change easily proves
\begin{lem}\label{lem:fin_loc_col}
  Suppose that we have a \emph{finite} diagram of algebraic stacks
  $\{Z_i\}_{i\in I}$, then a geometric colimit $(Z,\{\phi_i\}_{i\in
    I})$ is a uniform geometric colimit. 
\end{lem}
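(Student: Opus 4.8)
The plan is to unwind the definition of uniform geometric colimit: I must show that for \emph{every} flat algebraic $Z$-stack $f : Y \to Z$, the pulled-back data $(Y, \{(\phi_i)_Y\}_{i\in I})$ is a geometric colimit of the directed system $\{(Z_i)_Y\}_{i\in I}$, that is, simultaneously a universal Zariski colimit and a weak geometric colimit. Throughout I write $g_i : (Z_i)_Y \to Z_i$ for the projection, so that the square with sides $\phi_i$, $f$, $g_i$, $(\phi_i)_Y$ is cartesian for each $i$.

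For the universal Zariski part, note that this requires no hypotheses beyond those already in hand: universal Zariski colimits are stable under arbitrary base change. Indeed, given any algebraic $Y$-stack $W$, the composite $W \to Y \to Z$ exhibits $W$ as an algebraic $Z$-stack, and $((Z_i)_Y)_W = Z_i \times_Z W = (Z_i)_W$; since $(Z, \{\phi_i\})$ is a universal Zariski colimit, the data $(W, \{(\phi_i)_W\})$ is a Zariski colimit of $\{(Z_i)_W\}$. As $W$ was arbitrary, $(Y, \{(\phi_i)_Y\})$ is a universal Zariski colimit, and in particular a Zariski colimit, which is the first half of the weak geometric colimit condition.

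It remains to verify the ring-sheaf isomorphism $\Orb_Y \xrightarrow{\sim} \varprojlim_i ((\phi_i)_Y)_* \Orb_{(Z_i)_Y}$, and here both the finiteness of $I$ and the flatness of $f$ enter decisively. Since $f$ is flat, the pullback functor $f^*$ on $\Orb$-modules is exact, and flat base change along the cartesian squares gives canonical isomorphisms $f^* (\phi_i)_* \Orb_{Z_i} \cong ((\phi_i)_Y)_* g_i^* \Orb_{Z_i} = ((\phi_i)_Y)_* \Orb_{(Z_i)_Y}$ (the $\phi_i$ being schematic, hence quasi-compact and quasi-separated, so flat base change applies). Because $I$ is \emph{finite}, $\varprojlim_i$ is a finite limit---an equalizer of a map between finite products---and therefore commutes with the exact functor $f^*$. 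Applying $f^*$ to the isomorphism $\Orb_Z \cong \varprojlim_i (\phi_i)_* \Orb_{Z_i}$ supplied by the weak geometric colimit hypothesis, and using $f^* \Orb_Z = \Orb_Y$, then yields $\Orb_Y \cong \varprojlim_i f^*(\phi_i)_* \Orb_{Z_i} \cong \varprojlim_i ((\phi_i)_Y)_* \Orb_{(Z_i)_Y}$, as required. Combining the two parts shows $(Y, \{(\phi_i)_Y\})$ is a geometric colimit for each flat $Y \to Z$, which is precisely the assertion that $(Z, \{\phi_i\})$ is a uniform geometric colimit.

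The only genuine subtlety---and the reason neither hypothesis can be dropped---is that $f^*$ is a left adjoint, so it commutes with colimits automatically but with limits only when they are finite and $f^*$ is exact. Finiteness of the directed system converts the inverse limit into a finite limit, and flatness of $f$ supplies the exactness; together they are exactly what makes the ring-sheaf condition pull back. The one point I would confirm carefully is that flat base change is available at the level of algebraic stacks for the schematic morphisms $\phi_i$, which holds under the standing quasi-compact, separated-diagonal conventions of the paper.
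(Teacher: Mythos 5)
Your proof is correct and is precisely the argument the paper has in mind: the paper dispatches this lemma with the single remark that ``the exactness of flat pullback of sheaves easily proves'' it, and your write-up simply makes explicit the two ingredients (universal Zariski colimits are stable under arbitrary base change by definition, and the finite limit of pushforwards commutes with the exact functor $f^*$ via flat base change). The caveat you flag at the end is the right one to check, and it is harmless in every application the paper makes, where the $\phi_i$ are finite or quasi-finite, separated, and representable.
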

Also, note that weak geometric colimits in the category of
algebraic stacks are not unique and thus are not categorical colimits (Remark \ref{rem:algstk_col}). In the
setting of schemes, however, we have the following Lemma.
\begin{lem}\label{lem:sch_wgeom_colim}
Let $\{Z_i\}_{i\in I}$ be a diagram of schemes.
Then every weak geometric colimit $(Z,\{\phi_i\}_{i\in I})$ is a colimit in
the category of locally ringed spaces and, consequently, a colimit in the
category of schemes. 
\end{lem}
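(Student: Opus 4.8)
The plan is to verify the universal property of a colimit directly, first in the category of ringed spaces and then to upgrade it to locally ringed spaces; the statement about schemes is then formal. The weak geometric colimit hypothesis supplies two ingredients: a homeomorphism $\varinjlim_i |Z_i| \xrightarrow{\sim} |Z|$ and an isomorphism $\Orb_Z \xrightarrow{\sim} \varprojlim_i (\phi_i)_*\Orb_{Z_i}$. Because the forgetful functor $\mathrm{Top}\to\mathrm{Set}$ preserves colimits, the homeomorphism identifies the underlying set of $|Z|$ with $\varinjlim_i|Z_i|$, so in particular every point of $|Z|$ is $\phi_i(x_i)$ for some $i$ and some $x_i\in|Z_i|$; this point-surjectivity is exactly what the locality step below will need.

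First I would recover the fact, essentially recorded in the parenthetical of the definition, that $(Z,\{\phi_i\})$ is a colimit in ringed spaces. Given a compatible family of ringed-space morphisms $f_i : Z_i \to W$, the continuous maps $|f_i|$ factor uniquely through the homeomorphism to give $|f| : |Z|\to|W|$ with $|f|\circ|\phi_i| = |f_i|$. On structure sheaves, the compatible comorphisms $\Orb_W \to (f_i)_*\Orb_{Z_i} = |f|_*(\phi_i)_*\Orb_{Z_i}$ assemble, by the universal property of the inverse limit, into $\Orb_W \to |f|_*\varprojlim_i(\phi_i)_*\Orb_{Z_i}$ --- here using that $|f|_*$ is a right adjoint and hence commutes with the inverse limit --- and composing with $\varprojlim_i(\phi_i)_*\Orb_{Z_i}\cong\Orb_Z$ produces the comorphism of $f$. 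Uniqueness of $f$ follows from the two universal properties just invoked.

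The key step is to check that if every $f_i$ is a morphism of locally ringed spaces, then the ringed-space morphism $f$ just built is local. Fix $x\in|Z|$ and write $x=\phi_i(x_i)$ using the point-surjectivity above. Functoriality of stalks gives $(f_i)^\sharp_{x_i}=(\phi_i)^\sharp_{x_i}\circ f^\sharp_x$ as maps $\Orb_{W,f(x)}\to\Orb_{Z_i,x_i}$. Since $\phi_i$ is a morphism of \emph{schemes}, $(\phi_i)^\sharp_{x_i}$ is a local homomorphism, and $(f_i)^\sharp_{x_i}$ is local because $f_i$ is a morphism of locally ringed spaces; taking preimages of maximal ideals, $\mathfrak{m}_W=((f_i)^\sharp_{x_i})^{-1}(\mathfrak{m}_{Z_i})=(f^\sharp_x)^{-1}\big(((\phi_i)^\sharp_{x_i})^{-1}(\mathfrak{m}_{Z_i})\big)=(f^\sharp_x)^{-1}(\mathfrak{m}_Z)$, so $f^\sharp_x$ is local. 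As $x$ was arbitrary, $f$ is a morphism of locally ringed spaces, and its uniqueness is inherited from the ringed-space statement; hence $(Z,\{\phi_i\})$ is a colimit in locally ringed spaces.

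Finally, since schemes form a full subcategory of locally ringed spaces and $Z$ is a scheme, for any scheme $W$ the chain $\Hom_{\mathrm{Sch}}(Z,W)=\Hom_{\mathrm{LRS}}(Z,W)=\varprojlim_i\Hom_{\mathrm{LRS}}(Z_i,W)=\varprojlim_i\Hom_{\mathrm{Sch}}(Z_i,W)$ exhibits $Z$ as the colimit in schemes. I expect the only genuine obstacle to be the locality check of the third paragraph: the rest is bookkeeping with the topological colimit and the inverse-limit universal property. It is precisely there that one uses both that the structural maps $\phi_i$ are scheme morphisms and that the points of $Z$ are covered by the $Z_i$ --- without this point-surjectivity the local-homomorphism condition can genuinely fail, which is the usual reason a colimit of ringed spaces need not be a colimit of locally ringed spaces.
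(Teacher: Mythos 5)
Your proof is correct, and it is exactly the argument the paper has in mind: the paper states this lemma without proof, having already recorded in the parenthetical of the definition that a weak geometric colimit of schemes is the same as a colimit in ringed spaces, so the only real content is your third paragraph. Your locality check — using that the homeomorphism $\varinjlim_i|Z_i|\to|Z|$ forces every point of $Z$ to lie in the image of some $\phi_i$, and then composing the local homomorphisms $(\phi_i)^\sharp_{x_i}$ and $(f_i)^\sharp_{x_i}$ to deduce that $f^\sharp_x$ is local — is precisely the step that makes the upgrade from ringed to locally ringed spaces work, and the passage to schemes via full faithfulness is as routine as you say.
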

\begin{proof}
  Fix a \emph{ringed} space
  $W=(|W|,\Orb_W)$, together with compatible 
  morphisms of ringed spaces $\psi_i \colon Z_i \to W$. We will 
  produce a unique map of ringed spaces $\psi \colon Z 
  \to W$ satisfying $\psi\phi_i = \psi_i$ for all $i\in I$. Since
  $(Z,\{\phi_i\}_{i\in I})$ is a Zariski colimit, it follows that
  there exists a unique, continuous, map of topological spaces $\psi \colon |Z|
  \to |W|$ such that $\psi\phi_i = \psi_i$. On the lisse-\'etale site,
  we also know that the map $\Orb_Z \to \varprojlim_i
  (\phi_i)_*\Orb_{Z_i}$ is an isomorphism. The natural functor from
  the lisse-\'etale site of $Z$ to the Zariski site of $Z$ also
  preserves limits.  In particular, we deduce that as sheaves of rings
  on the topological space $|Z|$, the natural map $\Orb_Z \to
  \varprojlim_i (\phi_i)_*\Orb_{Z_i}$ is an isomorphism. The functor
  $\psi_*$ preserves limits, thus the maps $\Orb_{W} \to
  (\psi_i)_*\Orb_{Z_i}$ induce a unique map to
  $\psi_*\Orb_Z$. Hence, we obtain a unique morphism of ringed spaces
  $\psi \colon Z \to W$ that is compatible with the data. 
  
  To complete the proof it remains to show that if, in addition,
  $W=(|W|,\Orb_W)$ is a \emph{locally} ringed space and the morphisms
  $\psi_i$ are morphisms of locally ringed spaces, then $\psi$ is a
  morphism of locally ringed spaces. Let $z\in |Z|$. We must
  prove that the induced morphism $\Orb_{W,\psi(z)} \to \Orb_{Z,z}$ is
  local. Since $\amalg_{i\in I} |Z_i| \to |Z|$ is surjective, there
  exists an $i\in I$ and $z_i \in |Z_i|$ such that $\phi_i(z_i) =
  z$. By hypothesis, the morphism $\Orb_{W,\psi(z)} \to \Orb_{Z_i,z_i}$
  is local. Also, $Z_i \to Z$ is a morphism of schemes so
  $\Orb_{Z,z} \to \Orb_{Z_i,z_i}$ is also local. We immediately deduce
  that the morphism $\Orb_{W,\psi(z)} \to \Orb_{Z,z}$ is local, and the
  result is proven.
\end{proof}
The following criterion will be useful for verifying when a colimit is
a universal Zariski colimit.
\begin{lem}\label{lem:u_zar_col}
  Consider a diagram of algebraic stacks $\{Z_i\}_{i\in I}$, an
  algebraic stack $Z$, and suppose that we have compatible maps $\phi_i \colon Z_i \to Z$ for all $i\in I$. If the map
  $\amalg_{i} \phi_i \colon \amalg_i Z_i \to Z$ is surjective and
  universally submersive and also
  \begin{enumerate}
  \item \label{lem:u_zar_col1} for any geometric point $\spec K \to
    Z$, the map $\phi_K \colon 
    \varinjlim_i |(Z_i)_K| \to |\spec K|$ is an injection of sets; or 
  \item \label{lem:u_zar_col2} there is an algebraic stack $X$ such
    that $Z, Z_i \in \qfs{X}$ for all $i\in I$, the maps $\phi_i$
    are $X$-maps, and for any geometric point $\spec L \to X$, the map
    $\phi_L \colon \varinjlim_i |(Z_i)_L| \to |Z_L|$ is an injection of sets,
  \end{enumerate}
  then $(Z,\{\phi_i\}_{i\in I})$ is a universal Zariski colimit of the
  diagram $\{Z_i\}_{i\in I}$.
\end{lem}
\begin{proof}
  To show \itemref{lem:u_zar_col1}, we observe that the universal
  submersiveness hypothesis on the map $\amalg_i \phi_i \colon \amalg_i Z_i
  \to Z$ reduces the statement to showing that for any morphism of
  algebraic stacks $Y \to Z$, the map $\phi_Y \colon \varinjlim_i
  |(Z_i)_Y| \to |Y|$ is a bijection of sets, which will follow if the
  map $\phi_K \colon \varinjlim_i |(Z_i)_K| \to |\spec K|$ is bijective for
  any geometric point $\spec K \to Y$. By assumption, we know that
  $\phi_K$ is injective.  For the surjectivity, we note that we have a
  commutative diagram of sets:
  \[
  \xymatrix{\coprod_i |(Z_i)_K| \ar[r]^{\alpha_K} \ar[dr]_{\amalg_i \phi_i} &
    \varinjlim_i |(Z_i)_K| \ar[d]^{\phi_K} \\ & |\spec K| }, 
  \]
  where $\alpha_K$ and $\amalg_i \phi_i$ are surjective, thus $\phi_K$
  is also surjective. For  \itemref{lem:u_zar_col2}, given a geometric
  point $\spec L \to X$, then $Z(\spec L) = |Z_L|$ and
  $Z_i(\spec L) = |(Z_i)_L|$, thus we may apply the
  criterion of \itemref{lem:u_zar_col1} to obtain the claim.
\end{proof}
To obtain similarly useful results for stacks, we will need some relative
notions, unlike the previous definitions which were all absolute. 
\begin{defn}
  Fix an algebraic stack $X$ and a diagram $\{Z_i\}_{i\in I}$
  in $\RSCH{X}$. Suppose that $Z\in
  \RSCH{X}$, and that we have compatible $X$-morphisms
  $\phi_i \colon Z_i \to Z$ for all $i\in I$. We say that the data
  $(Z,\{\phi_i\}_{i\in I})$ is a \textbf{uniform categorical colimit
    in $\RSCH{X}$} if for any \emph{flat}, algebraic
  $X$-stack $Y$, the data $(Z_Y,\{(\phi_i)_Y\}_{i\in I})$ is the categorical
  colimit of the diagram
  $\{(Z_i)_Y\}_{i\in I}$ in $\RSCH{Y}$.
\end{defn}
Combining Lemmas \ref{lem:fin_loc_col} and \ref{lem:sch_wgeom_colim}
we obtain  
\begin{prop}\label{prop:fin_col_stk}
  Let $X$ be an algebraic stack, and suppose that $\{Z_i\}_{i\in I}$
  is a finite diagram in $\RSCH{X}$. Then a 
  geometric colimit $(Z,\{\phi_i\}_{i\in I})$ in $\RSCH{X}$ is 
  also a uniform geometric and uniform categorical colimit in
  $\RSCH{X}$.  
\end{prop}
\begin{proof}
  By Lemma \ref{lem:fin_loc_col}, it suffices to show that $Z$ is a
  categorical colimit. Let $U \to X$ be a smooth surjection from a
  scheme. By Lemmas \ref{lem:fin_loc_col} and
  \ref{lem:sch_wgeom_colim}, we see that $(Z_U, \{(\phi_i)_U\}_{i\in
    I})$ is a categorical colimit of the diagram
  $\{(Z_i)_U\}_{i\in I}$ in $\RSCH{U}$, and remains so after flat schematic
  base change on $U$. Suppose that we have compatible 
  $X$-morphisms 
  $\alpha_i \colon Z_i \to W$. Then we want to show that there is a unique
  $X$-morphism $\alpha\colon Z \to W$ that is compatible with this
  data. Let $R = U\times_X U$, and let
  $s$, $t \colon R \to U$ denote the two projections. Since $Z_U$ is
  the categorical colimit in $\RSCH{U}$, there is a unique 
  $X$-morphism $\beta_U \colon Z_U \to W_U$ that is
  compatible with $(\alpha_i)_U \colon (Z_i)_U \to W_U$. 

  Suppose for the
  moment that $X$ is an algebraic space. Then $R$ 
  is a scheme and the maps $s$, $t \colon R \to U$ are morphisms of
  schemes.  In particular, we know that $Z_R$ is a
  categorical colimit in $\RSCH{R}$ and so there is a unique morphism
  $\beta_R \colon Z_R \to W_R$ that is compatible with the
  morphisms $(\alpha_i)_R \colon (Z_i)_R \to W_R$. Noting
  that $(\beta_U)\times_{U,s} R$ and $(\beta_U)\times_{U,t} R$ are also
  such maps, we conclude that these maps are actually equal (as maps
  of algebraic spaces) and so by smooth descent, we conclude that
  there is a unique $X$-morphism $\alpha \colon Z \to W$, and so
  $(Z,\{\phi_i\}_{i\in I})$ is 
  a categorical colimit if $X$ is an algebraic space. Applying Lemma
  \ref{lem:fin_loc_col}, one is able to conclude that
  $(Z,\{\phi_i\}_{i\in I})$ remains a categorical colimit after flat
  \emph{representable} base change on $X$, for all algebraic spaces
  $X$. 

  Now, returning to the case that $X$ is an algebraic stack, we know
  that $s$, $t \colon R \to U$ are flat representable morphisms, repeating
  the descent argument given above shows that $(Z,\{\phi_i\})_{i\in
    I}$ is a categorical colimit in $\RSCH{X}$.  
\end{proof}
The following result is a generalization of \cite[Lem.\ 
17]{kollar-2008}, which treats the case of a finite equivalence
relation of schemes.
\begin{thm}\label{thm:pushouts}
  Let $X$ be a locally noetherian algebraic stack. Let 
  $\{Z_i\}_{i\in I}$ be a finite diagram in $\qfsfin{X}$.
  Then a colimit $(Z,\{\phi_i\}_{i\in I})$ exists in $\qfsfin{X}$. Moreover,
  this colimit is a uniform categorical colimit 
  in $\RSCH{X}$ and a uniform geometric colimit.
\end{thm}
We will need some lemmas to prove Theorem \ref{thm:pushouts}.
We first treat the affine case, then the finite case and finally the
quasi-finite case.
\begin{lem}\label{lem:aff_pushouts}
  Let $X$ be an algebraic stack. Let $\{Z_i\}_{i\in I}$
  be a finite diagram in $\RAFF{X}$. Then this diagram has a
  categorical colimit in $\RAFF{X}$, whose formation commutes with
  flat base change on $X$.
\end{lem}
\begin{proof}
  By \cite[Prop.\ 14.2.4]{MR1771927}, there is an 
  anti-equivalence of categories between $\RAFF{X}$ and the category
  of quasi-coherent sheaves of $\Orb_X$-algebras, which commutes with 
  arbitrary change of base, and is given by $(Z\xrightarrow{s} X)
  \mapsto (\Orb_X \xrightarrow{\smash[t]{s^\sharp}} s_*\Orb_Z)$.  Since the
  category of quasi-coherent $\Orb_X$-algebras has finite limits, it
  follows that if $s_i \colon Z_i \to X$ denotes the structure map of $Z_i$,
  then the categorical 
  colimit is $Z = \spec_X \varprojlim_i (s_i)_*\Orb_{Z_i}$. Since
  flat pullback of sheaves is exact, the formation of this colimit
  commutes with flat base change on $X$.
\end{proof}
\begin{lem}\label{lem:fin_pushouts}
  Let $X$ be a locally noetherian algebraic stack. Let
  $\{Z_i\}_{i\in I}$ be a finite diagram in
  $\RSCH{X}$ such that $Z_i\to X$ is finite for every $i\in I$.
  Then this diagram has a uniform
  categorical and uniform geometric colimit $Z$ in $\RSCH{X}$ which
  is finite over $X$.
\end{lem}
\begin{proof}
  By Lemma \ref{lem:aff_pushouts} the diagram has a 
  categorical colimit $(Z,\{\phi_i\}_{i\in I})$ in $\RAFF{X}$. 
  If $s_i \colon Z_i \to X$ and $s\colon Z\to X$ denote the structure maps, then
  $s_*\Orb_Z\subseteq \prod (s_i)_* \Orb_{Z_i}$.
  Since $X$ is locally
  noetherian, it follows that $Z$ is finite over $X$. By
  Proposition \ref{prop:fin_col_stk}, it remains to show that $Z$ is a
  geometric colimit. Note that since $\coprod_{i\in I} Z_i\to Z$
  is dominant and finite, it is surjective, universally
  closed and thus universally submersive.

  We will now show that $Z$
  is a universal Zariski colimit using the criterion of Lemma
  \ref{lem:u_zar_col}\itemref{lem:u_zar_col2}. To apply this
  criterion we will employ a generalization of the arguments given in \cite[Lem.\ 17]{kollar-2008}.
  Let $\bar{x} \colon \spec \Bbbk
  \to X$ be a geometric point. By
  \cite[0\textsubscript{III}.10.3.1]{EGA} this map factors as
  $\spec \Bbbk 
  \xrightarrow{\smash[t]{\bar{x}^1}} X^1 \xrightarrow{p} X$, where $p$ is flat and
  $X^1$ is the spectrum of a maximal-adically complete, local noetherian
  ring with residue field $\Bbbk$. Since the algebraic stack $Z$ is
  a uniform categorical colimit in $\RAFF{X}$, 
  we may replace $X$ by $X^1$, and we denote the unique closed point of
  $X$ by $x$. 

  For a finite $X$-scheme $U$, let 
  $\pi_0(U)$ be its set of connected components. The scheme $X$ is
  henselian \cite[IV.18.5.14]{EGA}, so there is a unique universal
  homeomorphism $h_{U} \colon U_x \to 
  \amalg_{m\in \pi_0(U)} \{x\}$ which is functorial with respect to
  $U$. In particular, there is a unique factorization $U
  \xrightarrow{s_U} \amalg_{m\in \pi_0(U)} X \to X$ such that
  $(s_U)_x = h_U$.

  Let $W_i=\coprod_{m\in \pi_0(Z_i)} X$. Since $\pi_0(-)$ is a functor, we
  obtain a diagram $\{W_i\}_{i\in I}$ in $\RAFF{X}$ and we let $W$ be the
  categorical colimit of this diagram in $\RAFF{X}$. It is readily seen that
  $W=\coprod_{m\in \varinjlim \pi_0(Z_i)} X$ so that there is a bijection of
  sets $\varinjlim_i \pi_0(Z_i)\to \pi_0(W)$.  Since $Z$ is a categorical
  colimit, there is a canonical map $\mu\colon Z \to W$.
  In particular, the
  bijection $\nu_x \colon \varinjlim_i |(Z_i)_x|
  \to |W_x|$ factors as $\varinjlim_i |(Z_i)_x|
  \xrightarrow{\psi_x} |Z_x| \xrightarrow{\mu_x} |W_x|$ and
  thus  $\psi_x \colon \varinjlim_i |(Z_i)_x| \to |Z_x|$ is
  injective. Hence, we have shown that  $Z$ is a universal 
  Zariski colimit and it remains to show that $Z$ has the correct
  functions. 
  
  There is a canonical morphism of sheaves of $\Orb_X$-algebras
  $\epsilon\colon\Orb_Z \to \varprojlim_i (\phi_i)_*\Orb_{Z_i}$,
  which we have to show is an isomorphism. By functoriality,
  we have an induced morphism of
  sheaves of $\Orb_X$-algebras:
  \[
  \epsilon_2 \colon s_*\Orb_{Z} \xrightarrow{s_*\epsilon}
    s_*\bigl(\varprojlim_i (\phi_i)_*\Orb_{Z_i}\bigr) \xrightarrow{\epsilon_1}
    \varprojlim_i s_*(\phi_i)_*\Orb_{Z_i}.
  \]
  Since the functor $s_*$ is left exact, $\epsilon_1$ is an
  isomorphism; by construction of $Z$ the map $\epsilon_2$ is an 
  isomorphism and so $s_*\epsilon$ is an isomorphism. Since 
  $s$ is affine, the functor $s_*$ is faithfully exact and we
  conclude that the map $\epsilon$ is an isomorphism of sheaves.
\end{proof}
\begin{proof}[Proof of Theorem \ref{thm:pushouts}]
By hypothesis, the structure morphisms $s_i \colon Z_i \to X$
are quasi-finite, separated, and representable. By 
Zariski's Main Theorem \cite[Thm.\ 16.5(ii)]{MR1771927}, there are finite    
$X$-morphisms $\bar{s_i}\colon \overline{Z_i} \to X$ and open
immersions $u_i \colon Z_i
\hookrightarrow \overline{Z_i}$. Let $W_i=\prod_{i\to j} \overline{Z_j}$
where the product is fibered over $X$. There is an induced morphism
$Z_i\to W_i$ given by the composition
$Z_i\xrightarrow{h_*} Z_j\xrightarrow{u_j} \overline{Z_j}$ over
the factor corresponding to the arrow
$h\colon i\to j$. Using the composition in $I$, there is a natural diagram
$\{W_i\}_{i\in I}$ such that
the morphisms $Z_i\to W_i$ induce a morphism of diagrams.
%
%
Since $Z_i\to \overline{Z_i}$ is an immersion, so is $Z_i\to W_i$.
Now, replace $W_i$ with the schematic image of $Z_i\to W_i$. Then
$Z_i\to W_i$ is an open dense immersion and
$\{W_i\}_{i\in I}$ is a diagram with finite structure morphisms $W_i\to X$.
Thus, the diagram $\{W_i\}_{i\in I}$ has a uniform
geometric and uniform categorical colimit $W\in\RSCH{X}$ and
$W \to X$ is finite (Lemma \ref{lem:fin_pushouts}).

We will now show that for any arrow $h\colon i\to j$, the open substack $Z_j\subset
W_j$ is pulled back to the open substack $Z_i\subset W_i$.
We have canonical maps $Z_i
\xrightarrow{\alpha} Z_j\times_{W_j} W_i \xrightarrow{\beta} W_i$ and since
the maps $\beta\circ \alpha$ and $\beta$ are open immersions, so is $\alpha
\colon Z_i \to Z_j\times_{W_j} W_i$. Similarly, we have $Z_i
\xrightarrow{\alpha} Z_j\times_{W_j} W_i \xrightarrow{\gamma} Z_j$ where
$\gamma\circ \alpha$ and $\gamma$ are finite morphisms.
Thus $\alpha \colon Z_i \to Z_j\times_{W_j} W_i$ is open and
closed. Since $u_i=\beta\circ \alpha \colon Z_i \to W_i$ is dense, we conclude
that $Z_i = Z_j\times_{W_j} W_i$. 

Let $|Z|$ be the set-theoretic image of $\coprod_i |Z_i|$ in
$|W|$. As $W$ is a Zariski colimit, it follows that
$|Z|=\varinjlim_i |Z_i|$ is open in $|W|$. We let $Z\subset
W$ be the open substack with underlying topological space $|Z|$.
Then the diagram $\{Z_i\}_{i\in I}$
is obtained as the pull-back of the diagram $\{W_i\}_{i\in I}$
along the open immersion $Z\to W$. Since $W$ is a uniform geometric
colimit, the pull-back $Z$ is a uniform geometric colimit. As $Z\in\RSCH{X}$,
it is a uniform categorical colimit in $\RSCH{X}$ by
Proposition~\ref{prop:fin_col_stk}.
\end{proof}
\subsection{Completions of schemes}
Here we will show that the colimits constructed in Theorem
\ref{thm:pushouts} remain colimits after completing along a closed
subset. Denote the category of formal schemes by
$\FMLSCHABS$. We require some more definitions that are analogous to
those given in \S\ref{sec:coeq_sch}.  
\begin{defn}
  Consider a diagram of formal schemes $\{\fml{Z}_i\}_{i\in
    I}$, a formal scheme $\fml{Z}$, and suppose that we have
  compatible maps $\varphi_i \colon  \fml{Z}_i \to \fml{Z}$ for 
  every $i\in I$. Then we say that the data
  $(\fml{Z},\{\varphi_i\}_{i\in I})$ is a  
  \begin{enumerate}
  \item \textbf{formal Zariski colimit} if the induced map on
    topological spaces $\varinjlim_i |\fml{Z}_i| \to |\fml{Z}|$ is a
    homeomorphism;
  \item \textbf{formal weak geometric colimit} if
    it is a formal Zariski colimit and the canonical map of sheaves of
    rings $\Orb_{\fml{Z}} \to \varprojlim_i
    (\varphi_i)_*\Orb_{\fml{Z}_i}$ is a topological isomorphism, where
    we give the latter sheaf of rings the limit topology (this is
    nothing other than $\fml{Z}$ being the colimit in the category of
    topologically ringed spaces).
  \end{enumerate}
  If, in addition, the formal scheme $\fml{Z}$ is locally noetherian,
  and the maps $\varphi_i \colon \fml{Z}_i \to \fml{Z}$ are topologically
  of finite type, then we say that the data
  $(\fml{Z},\{\varphi_i\}_{i\in I})$ is a 
  \begin{enumerate}
  \item[(3)] \textbf{universal formal Zariski colimit}  if for any
    adic formal $\fml{Z}$-scheme $\fml{Y}$, the data
    $({\fml{Y}}, \{(\varphi_i)_{\fml{Y}}\}_{i\in I})$ is   
    the formal Zariski colimit of the diagram
    $\{(\fml{Z}_i)_{\fml{Y}}\}_{i\in I}$; 
  \item[(4)] \textbf{formal geometric colimit} if
    it is a universal formal Zariski and a formal weak geometric colimit of the
    diagram
    $\{\fml{Z}_i\}_{i\in 
      I}$;
  \item[(5)] \textbf{uniform formal geometric colimit} if for any
    adic flat formal $\fml{Z}$-scheme $\fml{Y}$, the data
    $({\fml{Y}},\{(\varphi_i)_{\fml{Y}}\}_{i\in I})$ is a
    formal geometric colimit of the diagram
    $\{(\fml{Z}_i)_{\fml{Y}}\}_{i\in I}$.    
  \end{enumerate}
\end{defn}
We have two lemmas which are the analogues
of Lemmas \ref{lem:fin_loc_col} and \ref{lem:sch_wgeom_colim} for
formal schemes.
\begin{lem}\label{lem:wgeom_fml_col}
  Let $\{\fml{Z}_i\}_{i\in I}$ be a diagram of formal schemes.
  Then every weak geometric colimit $(\fml{Z},\{\varphi_i\}_{i\in
    I})$ is a colimit in the category of topologically \emph{locally ringed}
  spaces and, consequently, a colimit in the category $\FMLSCHABS$.
\end{lem}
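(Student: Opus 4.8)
The plan is to follow the proof of the scheme-theoretic analogue, Lemma \ref{lem:sch_wgeom_colim}, adapting each step to the topological setting. Since $\FMLSCHABS$ is a full subcategory of the category of topologically locally ringed spaces, it suffices to prove the stronger assertion that $(\fml{Z},\{\varphi_i\})$ is a colimit there; the final clause of the lemma is then immediate by fullness. I would first record that $\fml{Z}$ is already a colimit in the category of topologically \emph{ringed} spaces, which is essentially the content of the definition of a formal weak geometric colimit. Concretely, given a cocone $\{\psi_i : \fml{Z}_i \to \fml{W}\}$, the formal Zariski hypothesis identifies $|\fml{Z}|$ with $\varinjlim_i |\fml{Z}_i|$, so there is a unique continuous map $|\psi| : |\fml{Z}| \to |\fml{W}|$ through which all the $|\psi_i|$ factor; and since $\psi_*$ preserves limits, the compatible comorphisms $\Orb_{\fml{W}} \to (\psi_i)_*\Orb_{\fml{Z}_i} = \psi_*(\varphi_i)_*\Orb_{\fml{Z}_i}$ assemble, through the topological isomorphism $\Orb_{\fml{Z}} \xrightarrow{\sim} \varprojlim_i (\varphi_i)_*\Orb_{\fml{Z}_i}$, into a unique continuous comorphism $\Orb_{\fml{W}} \to \psi_*\Orb_{\fml{Z}}$.

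The substantive point, and the only place where more than the ringed-space structure is used, is to upgrade this to the locally ringed setting: I must check that the morphism $\psi$ produced above is local on stalks, and that uniqueness persists. Uniqueness is automatic, since the forgetful functor to topologically ringed spaces is faithful, so two parallel morphisms of topologically locally ringed spaces that agree there are equal. For locality, the key observation is that, because $\varinjlim_i |\fml{Z}_i| \to |\fml{Z}|$ is a homeomorphism, and in particular a bijection of underlying sets, every point $z \in |\fml{Z}|$ is the image $\varphi_i(z_i)$ of some point $z_i$ of some $\fml{Z}_i$.

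Fixing such a $z$ with image $w = \psi(z)$, I would examine the stalk comorphism $\psi^\sharp_z : \Orb_{\fml{W},w} \to \Orb_{\fml{Z},z}$. Composing it with $(\varphi_i)^\sharp_{z_i} : \Orb_{\fml{Z},z} \to \Orb_{\fml{Z}_i,z_i}$ recovers $(\psi_i)^\sharp_{z_i}$, which is local because $\psi_i$ is a morphism of locally ringed spaces. Since $\varphi_i$ is a morphism of formal schemes, $(\varphi_i)^\sharp_{z_i}$ is itself local, whence $\mathfrak{m}_z = ((\varphi_i)^\sharp_{z_i})^{-1}(\mathfrak{m}_{z_i})$; taking preimages gives $(\psi^\sharp_z)^{-1}(\mathfrak{m}_z) = ((\psi_i)^\sharp_{z_i})^{-1}(\mathfrak{m}_{z_i}) = \mathfrak{m}_w$, which is precisely the locality of $\psi^\sharp_z$. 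Thus $\psi$ is a morphism of topologically locally ringed spaces, establishing the universal property and hence the lemma. I expect this stalk computation to be the crux: it is exactly where the formal Zariski hypothesis, guaranteeing that every point of $\fml{Z}$ lifts to the system, is indispensable, and it is the step that fails for a general colimit of ringed spaces.
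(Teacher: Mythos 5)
Your proof is correct; the paper states this lemma without proof (treating it, like its scheme-theoretic counterpart Lemma \ref{lem:sch_wgeom_colim}, as routine), and your argument is exactly the standard verification that is being left implicit. The one point worth isolating is the one you correctly identify as the crux: surjectivity of $\coprod_i |\fml{Z}_i| \to |\fml{Z}|$, coming from the formal Zariski hypothesis, is what lets the locality of each $(\psi_i)^\sharp_{z_i}$ and $(\varphi_i)^\sharp_{z_i}$ be transported to $\psi^\sharp_z$ at every point of $\fml{Z}$.
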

\begin{lem}\label{lem:fin_loc_fml_col}
  Consider a \emph{finite} diagram of locally noetherian
  formal schemes $\{\fml{Z}_i\}_{i\in I}$, and a locally noetherian
  formal scheme $\fml{Z}$, together with \emph{finite} morphisms
  $\varphi_i \colon \fml{Z}_i \to \fml{Z}$. If the data
  $(\fml{Z},\{\varphi_i\}_{i\in I})$ is a formal geometric colimit of
  the diagram $\{\fml{Z}_i\}_{i\in I}$, then it is a uniform 
  formal geometric colimit.
\end{lem}
\begin{proof}
  Let $\varpi\colon \fml{Y} \to \fml{Z}$ be an adic flat
  morphism of locally noetherian formal schemes.
  It remains to show that the canonical map $\vartheta_{\fml{Y}} \colon
  \Orb_{\fml{Y}} \to \varprojlim_i
  [(\varphi_i)_{\fml{Y}}]_*\Orb_{(\fml{Z}_i)_{\fml{Y}}}$ is a
  topological isomorphism. Since $\varphi_i$ is finite for all $i$,
  and we are taking a finite limit, we conclude that it suffices to
  show that $\vartheta_{\fml{Y}}$ is an isomorphism of coherent
  $\Orb_{\fml{Y}}$-modules. By hypothesis, the map
  $\vartheta_{\fml{Z}} \colon \Orb_{\fml{Z}} \to \varprojlim_i
  (\varphi_i)_*\Orb_{\fml{Z}_i}$ is an isomorphism, and since $\varpi$ is adic
  flat, $\varpi^*$ is an exact functor from coherent
  $\Orb_{\fml{Z}}$-modules to coherent $\Orb_{\fml{Y}}$-modules, thus
  commutes with finite limits. Hence, we see that
  $\vartheta_{\fml{Y}}$ factors as the following sequence of isomorphisms:
  \[
  \Orb_{\fml{Y}} \cong \varpi^*\Orb_{\fml{Z}} \cong
  \varprojlim_i
  \varpi^*[(\varphi_i)_*\Orb_{\fml{Z}_i}] \cong
  \varprojlim_i [(\varphi_i)_{\fml{Y}}]_*\Orb_{(\fml{Z}_i)_{\fml{Y}}}.\qedhere
  \]
\end{proof}
\begin{defn}
  Fix a locally noetherian formal scheme $\fml{X}$ and let
  $\{\fml{Z}_i\}_{i\in I}$ be a diagram
  in $\FMLSCH{\fml{X}}$, where each formal scheme
  $\fml{Z}_i$ is locally noetherian. Let
  $\fml{Z}\in\FMLSCH{\fml{X}}$ also be locally noetherian, 
  and suppose that we have compatible
  $\fml{X}$-morphisms $\varphi_i \colon\fml{Z}_i \to \fml{Z}$ that are
  topologically of finite type. Then we say
  that the data 
  $(\fml{Z},\{\varphi_i\}_{i\in I})$ is a \textbf{uniform categorical
    colimit in $\FMLSCH{\fml{X}}$} if for any 
  adic flat formal $\fml{X}$-scheme
  $\fml{Y}$, the data
  $(\fml{Z}_{\fml{Y}},\{(\varphi_i)_{\fml{Y}}\}_{i\in I})$ is the
  categorical colimit of the diagram
  $\{(\fml{Z}_i)_{\fml{Y}}\}_{i\in I}$ in $\FMLSCH{\fml{Y}}$. 
\end{defn}
\begin{defn}
  Let $\fml{X}$ be a formal scheme. Define
  $\qfs{\fml{X}}$ to be the category whose objects are adic,
  quasi-finite, and separated maps $(\fml{Z} \xrightarrow{\sigma}
  \fml{X})$. A morphism in $\qfs{\fml{X}}$ is an $\fml{X}$-morphism
  $\fml{f} \colon (\fml{Z} \xrightarrow{\sigma} \fml{X}) \to (\fml{Z}'
  \xrightarrow{\sigma'} \fml{X})$.  
\end{defn}
For a scheme $X$, and a closed subset $|V| \subset |X|$, we
define the \textbf{completion functor} 
\[
c_{X,|V|} \colon \SCH{X} \to \FMLSCH{\cmpl{X}_{/|V|}},\quad (Z \xrightarrow{s}
X) \mapsto 
\bigl(\cmpl{Z}_{/s^{-1}|V|} \to \cmpl{X}_{/|V|}\bigr).
\]
Note that restricting $c_{X,|V|}$ to $\qfs{X}$ has essential image
contained in $\qfs{\cmpl{X}_{/|V|}}$.
\begin{thm}\label{thm:fml_pushouts_commute}
  Let $X$ be a locally noetherian scheme, let $|V|
  \subset 
  |X|$ be a closed subset, and let $\{Z_i\}_{i\in I}$
  be a diagram in $\qfsfin{X}$. If the scheme $Z$
  denotes the categorical colimit of the diagram in $\SCH{X}$, which
  exists by Theorem \ref{thm:pushouts}, then
  $c_{X,|V|}(Z)$ is a uniform categorical 
  and uniform formal geometric colimit of the diagram
  $\{c_{X,|V|}(Z_i)\}_{i\in I}$
  in $\FMLSCH{\cmpl{X}_{/|V|}}$. 
\end{thm}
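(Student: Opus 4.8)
The plan is to mirror, in the formal category, the passage from a geometric colimit to a uniform geometric and uniform categorical colimit carried out for schemes in Corollary~\ref{cor:fin_col_stk}. Concretely, I will first show that $c_{X,|V|}(Z''')$ is a \emph{formal weak geometric colimit} of the completed diagram, then that it is a \emph{universal formal Zariski colimit}, so that together these give a formal geometric colimit; Lemma~\ref{lem:fin_loc_fml_col} will then upgrade this to a uniform formal geometric colimit, and Lemma~\ref{lem:wgeom_fml_col} will yield the uniform categorical statement. The essential preliminary observation is that the canonical maps $m' : Z' \to Z'''$, $m'' : Z'' \to Z'''$ and $m : Z \to Z'''$ are \emph{finite}: indeed, the four side faces of the cube~\eqref{eq:1} in the proof of Theorem~\ref{thm:pushouts} are cartesian, so $Z' = W'\times_{W'''}Z'''$, and likewise for $Z$ and $Z''$, exhibiting $m'$, $m''$, $m$ as base changes of the finite maps $\bar m'$, $\bar m''$, $\bar m$ along the open immersion $\imath'''$. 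Consequently $\cmpl{m}'$, $\cmpl{m}''$, $\cmpl{m}$ are adic and finite, which is exactly what Lemma~\ref{lem:fin_loc_fml_col} and the coherence arguments below require.

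Next, to verify the formal weak geometric colimit property, I treat topology and functions separately. On underlying spaces, the completion $c_{X,|V|}(Z')$ has space $|s'|^{-1}|V|$, and since $m'$, $m''$ are $X$-morphisms the homeomorphism $|Z'|\amalg_{|Z|}|Z''|\xrightarrow{\sim}|Z'''|$ established in the proof of Theorem~\ref{thm:pushouts} restricts over $|V|$ to a homeomorphism $|s'|^{-1}|V|\amalg_{|s|^{-1}|V|}|s''|^{-1}|V|\xrightarrow{\sim}|s'''|^{-1}|V|$, i.e.\ a formal Zariski colimit. For functions, the weak geometric colimit property of $Z'''$ gives $\Orb_{Z'''}\cong m'_*\Orb_{Z'}\times_{m_*\Orb_Z}m''_*\Orb_{Z''}$; because $m'$, $m''$, $m$ are finite, each pushforward is a coherent $\Orb_{Z'''}$-module. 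The completion functor along $|V|$ is exact on coherent sheaves and commutes with finite pushforward, so applying it to the fibre product (written as the kernel of $m'_*\Orb_{Z'}\oplus m''_*\Orb_{Z''}\to m_*\Orb_Z$) yields $\Orb_{c_{X,|V|}(Z''')}\cong \cmpl{m}'_*\Orb_{c_{X,|V|}(Z')}\times_{\cmpl{m}_*\Orb_{c_{X,|V|}(Z)}}\cmpl{m}''_*\Orb_{c_{X,|V|}(Z'')}$, with the limit (adic) topology, as required.

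For the universal formal Zariski property I use a reduction trick. Fix an ideal of definition $\mathcal{I}\ideal\Orb_{\cmpl{X}_{/|V|}}$ and let $Z'''_0\subset Z'''$ be the closed subscheme it cuts out, a scheme with $|Z'''_0|=|s'''|^{-1}|V|$. Given an adic, locally of finite type formal $c_{X,|V|}(Z''')$-scheme $\fml{Y}$, its reduction $Y_0:=\fml{Y}\times_{c_{X,|V|}(Z''')}Z'''_0$ is a scheme, locally of finite type over $Z'''$, with $|Y_0|=|\fml{Y}|$; moreover $|\fml{Z}'_{\fml{Y}}| = |Z'\times_{Z'''}Y_0|$ and likewise for $Z$ and $Z''$, because the underlying space of an adic formal scheme agrees with that of its reduction and this is compatible with base change. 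Since Theorem~\ref{thm:pushouts} asserts that $Z'''$ is a \emph{universal} Zariski colimit, the scheme map $Y_0\to Z'''$ gives the homeomorphism $|Z'\times_{Z'''}Y_0|\amalg_{|Z\times_{Z'''}Y_0|}|Z''\times_{Z'''}Y_0|\xrightarrow{\sim}|Y_0|$, which is precisely the formal Zariski colimit property after base change to $\fml{Y}$. Thus $c_{X,|V|}(Z''')$ is a formal geometric colimit, and Lemma~\ref{lem:fin_loc_fml_col} makes it uniform.

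Finally, the uniform categorical statement follows exactly as in Corollary~\ref{cor:fin_col_stk}: after any adic, flat, locally of finite type base change $\fml{Y}\to\cmpl{X}_{/|V|}$ the projection $\fml{Z}'''_{\fml{Y}}\to c_{X,|V|}(Z''')$ is again adic, flat and locally of finite type, so by the uniform formal geometric colimit property the base-changed data is a formal weak geometric colimit; Lemma~\ref{lem:wgeom_fml_col} then identifies it with a colimit in topologically locally ringed spaces, hence a categorical colimit in $\FMLSCH{\fml{Y}}$. I expect the main obstacle to be the two compatibility inputs in the middle paragraphs---checking that completion of the fibre product of coherent sheaves respects the adic topology (not merely the underlying sheaf isomorphism), and that the underlying topological space of $\fml{Z}'_{\fml{Y}}$ really is computed by the reduction $Y_0$; once these are in hand, the assembly via the existing lemmata is formal.
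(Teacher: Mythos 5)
Your proposal follows essentially the same route as the paper: establish that $c_{X,|V|}(Z''')$ is a formal geometric colimit (the universal formal Zariski property by reduction to the underlying schemes over $V(\mathscr{I})$ and the universality built into Theorem \ref{thm:pushouts}; the correct functions by exactness of completion on coherent modules together with the finiteness of $m$, $m'$, $m''$), and then upgrade via Lemmata \ref{lem:fin_loc_fml_col} and \ref{lem:wgeom_fml_col}. Your derivation of the finiteness of $m$, $m'$, $m''$ from the cartesian side faces of the cube \eqref{eq:1} is correct, and is a point the paper leaves implicit. The one step you flag but do not carry out --- that the sheaf isomorphism $\Orb_{c_{X,|V|}(Z''')} \cong \cmpl{m}'_*\Orb_{c_{X,|V|}(Z')} \times_{\cmpl{m}_*\Orb_{c_{X,|V|}(Z)}} \cmpl{m}''_*\Orb_{c_{X,|V|}(Z'')}$ is a \emph{topological} isomorphism for the limit topology --- is genuinely needed and is exactly where the paper invokes the Artin--Rees lemma: the limit topology is the subspace topology induced from the product of $\cmpl{m}'_*\Orb_{c_{X,|V|}(Z')}$ and $\cmpl{m}''_*\Orb_{c_{X,|V|}(Z'')}$, and since $m'$, $m''$ are finite, Artin--Rees identifies that subspace topology with the adic topology, so the isomorphism is automatically topological.
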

\begin{proof}
Let $\fml{X}=\cmpl{X}_{/|V|}$,
$\fml{Z}=c_{X,|V|}(Z)$, and $\fml{Z_i}=c_{X,|V|}(Z_i)$.
By Lemmas \ref{lem:wgeom_fml_col} and \ref{lem:fin_loc_fml_col}, it
suffices to show that $\fml{Z}$ is a formal geometric
colimit of the diagram $\{\fml{Z_i}\}_{i\in I}$. We first show
that $\fml{Z}$ is a universal formal Zariski colimit. Let
$\varpi \colon \fml{Y} \to \fml{X}$ be an adic morphism of locally
noetherian formal schemes. Let
$\mathscr{I}$ be a coherent sheaf of radical ideals defining
$|V| \subset |X|$ so that $(\varpi^{-1}\mathscr{I})\Orb_{\fml{Y}}$ is an
ideal of definition of $\fml{Y}$. Let $X_0 = V(\mathscr{I}) \subset
X$ and $Y_0 = \fml{Y} \times_{\fml{X}} X_0$.
Then by Theorem \ref{thm:pushouts} the map of topological spaces
$\varinjlim_{i\in I}|Z_i\times_X Y_0| \to
|Z\times_X Y_0|$ is a homeomorphism. Noting that $|\fml{Z}_{\fml{Y}}|
= |Z\times_X Y_0|$ and $|\fml{Z_i}_{\fml{Y}}|
= |Z_i\times_X Y_0|$, we conclude that $\fml{Z}$ is a universal formal
Zariski colimit and it remains to show that $\fml{Z}$ has the
correct functions. 

Let $\phi_i \colon Z_i \to Z$ denote the canonical morphisms.
By Theorem \ref{thm:pushouts} we
have an isomorphism of sheaves of rings $\epsilon \colon \Orb_Z \to
\varprojlim_i (\phi_i)_*\Orb_{Z_i}$. Hence, since $\phi_i$
is finite, and completion is exact on coherent
modules \cite[I.10.8.9]{EGA}, by \cite[I.10.8.8(i)]{EGA} we have an
isomorphism of coherent $\Orb_{\fml{Z}}$-algebras:
\[
\Orb_{\fml{Z}} \xrightarrow{\epsilon^\wedge}
(\varprojlim_i \phi_{i*}\Orb_{Z_i})^\wedge
\cong \varprojlim_i (\phi_{i*}\Orb_{Z_i})^\wedge
\cong \varprojlim_i \cmpl{\phi_{i}}_*\Orb_{\fml{Z_i}}.
\]
It remains to show that this isomorphism is topological (where we
endow the right side with the limit topology). A general fact
here is that the topology on the right is the subspace topology of the
product topology on $\prod_{i\in I} \cmpl{\phi_i}_*\Orb_{\fml{Z_i}}$.
Since $\phi_i$ is finite, Krull's Theorem
\cite[$0_{\mathrm{I}}$.7.3.2]{EGA} implies that the limit and adic
topologies coincide. The result follows. 
\end{proof}
\subsection{Completions of algebraic stacks}
Here we observe that the colimits constructed in Theorem
\ref{thm:pushouts} remain colimits after completing along a closed
subset. To avoid developing the theory of formal algebraic
stacks, we will work with adic systems of algebraic stacks. This has
the advantage of being elementary as well as sufficient for our
purposes. 

Let $X$ be a locally noetherian algebraic stack and suppose that $|V|
\subseteq |X|$ is a closed subset, defined by a coherent
$\Orb_X$-ideal $I$. For each $n\geq 0$ let $X_n = V(I^{n+1})$ and define:
\[
\qfs{\cmpl{X}_{/|V|}} := \varprojlim_n \qfs{X_n}. 
\]
This is an abuse of notation, and for that we apologize. We firmly
believe, however, that this notation will be sufficiently convenient
to outweigh any potential confusion that may arise. Also, there is a
completion functor: 
\[
c_{X,|V|} \colon \qfs{X} \to \qfs{\cmpl{X}_{/|V|}},\quad (Z \to X) \mapsto
(Z\times_X X_n \to X_n)_{n\geq 0}.
\]
We conclude this section with a corollary,
which is an immediate consequence of
Theorem \ref{thm:pushouts}, smooth descent, and Theorem \ref{thm:fml_pushouts_commute}. 
\begin{cor}\label{cor:fml_pushouts_commute_stks}
  Let $X$ be a locally noetherian algebraic stack. Suppose that $|V|
  \subseteq 
  |X|$ is a closed subset. Let $\{Z_i\}_{i\in I}$ be a diagram
  in $\qfsfin{X}$ and let $Z\in\qfs{X}$ 
  be the categorical colimit, which
  exists by Theorem \ref{thm:pushouts}. Then
  $c_{X,|V|}(Z)$ is a categorical colimit of the diagram
  $\{c_{X,|V|}(Z_i)\}_{i\in I}$
  in $\qfs{\cmpl{X}_{/|V|}}$, and remains so after flat and locally
  noetherian base change on $X$. 
\end{cor}


\section{Generalized Stein
  factorizations}\label{ch:stein_factorizations}
A morphism of schemes $f \colon X \to Y$ is \textbf{Stein} if the morphism
$f^\sharp \colon \Orb_Y \to f_*\Orb_X$ is an isomorphism. If $f$ is a
proper morphism of locally noetherian schemes that is Stein, then
Zariski's Connectedness Theorem \cite[III.4.3.2]{EGA} implies that $f$ 
is surjective with geometrically connected fibers.  Note that if $f$ is
quasi-compact and quasi-separated then it factors as:
\[
X \xrightarrow{r} \widetilde{X} \xrightarrow{\tilde{f}} Y,
\]
where $r$ is Stein and $\tilde{f}$ is affine. Indeed, we simply take
$\widetilde{X}=\spec_Y(f_*\Orb_X)$. In general, this factorization says little
about $f$. In the case where $Y$ is locally noetherian and $f$ is
proper, however, one obtains the well-known \textbf{Stein
  factorization} of $f$ \cite[III.4.3.1]{EGA}. In this case, $\tilde{f}$ is
finite and $r$ is proper and surjective with geometrically connected
fibers. Note that the Stein factorization is also unique and
compatible with flat base change on $Y$. We would like to generalize
these properties of the Stein factorization to non-proper morphisms. 

We also wish to point out that the above discussion is perfectly valid
for locally noetherian formal schemes. That is, if $\varphi \colon \fml{X} \to \fml{Y}$ is a
proper morphism of locally noetherian formal schemes, then it admits a
Stein factorization: $\fml{X} \xrightarrow{\rho} \widetilde{\fml{X}}
\xrightarrow{\tilde{\varphi}} \fml{Y}$, where $\tilde{\varphi}$ is finite and $\rho$
is proper and Stein. The Stein factorization is
compatible with (not necessarily adic) flat base change on
$\fml{Y}$ (Proposition~\ref{prop:fml_bc}) and $\rho$ has geometrically connected
fibers (Corollary~\ref{cor:zar_con_fml}).

In this section we will address the following question: suppose that $\varphi \colon \fml{X} \to \fml{Y}$ is an adic morphism of
locally noetherian formal schemes
that is locally of finite type. Is there a
factorization of $\varphi$ as $\fml{X} \xrightarrow{\rho} \widetilde{\fml{X}}
\xrightarrow{\tilde{\varphi}} \fml{Y}$, where $\tilde{\varphi}$ is locally
quasi-finite and $\rho$ is proper and Stein? If such a factorization
exists, we call it a \textbf{generalized Stein factorization}. A
desirable property of a generalized Stein factorization will be that
it is compatible with flat base change on $\fml{Y}$.  

Note that a necessary condition for $\varphi \colon \fml{X} \to \fml{Y}$ to
admit a generalized Stein factorization is that every $w\in |\fml{X}|$
lies in a proper connected component of the fiber
$\fml{X}_{\varphi(w)}$. We will call such morphisms \textbf{locally
  quasi-proper}. If $\varphi$ is separated (resp.\ quasi-compact), then
so is $\tilde{\varphi}$ in any generalized Stein factorization since
$\rho$ is proper and surjective. In \S\ref{sec:gen_stein} we prove
\begin{thm}\label{thm:pf_sch_adj}
  Let $\fml{Y}$ be a locally noetherian formal scheme. Suppose that
  $\varphi \colon \fml{X} \to \fml{Y}$ is locally quasi-proper and
  separated.  Then $\varphi$ admits a generalized Stein
  factorization $\fml{X} \xrightarrow{\rho} \widetilde{\fml{X}}
  \xrightarrow{\tilde{\varphi}} \fml{Y}$
  which is unique and compatible with flat base change on
  $\fml{Y}$. Moreover, $\tilde{\varphi}$ is separated and if $\varphi$ is
  quasi-compact, so is $\tilde{\varphi}$.
\end{thm}
In future work, we will prove Theorem
\ref{thm:pf_sch_adj} for algebraic 
stacks in order to compare our generalized Stein
factorizations with the connected component fibrations of
\cite[\S6.8]{MR1771927} and \cite[Thm.~2.5.2]{MR2820394}. We wish to
point out that Theorem \ref{thm:pf_sch_adj} should extend to a
reasonable theory of formal algebraic spaces or stacks. Unfortunately,
the only treatise on formal algebraic spaces that we are aware of is D.~Knutson's \cite[V]{MR0302647},
and this requires everything to be separated
\cite[V.2.1]{MR0302647}---an assumption we definitely do not
wish to impose. We are not aware of any written account of a theory of
formal algebraic stacks.
\begin{rem}
  We will construct the generalized Stein factorization by working
  \'etale-locally on $\fml{Y}$. If $f \colon X \to Y$ is a separated and locally
  quasi-proper morphism of schemes
  that is quasi-compact, then there is a more explicit
  construction of the generalized Stein factorization. If $\bar{Y}$ denotes
  the integral closure of $Y$ in $f_*\Orb_X$, then the image of $X\to \bar{Y}$
  is open and equals $\widetilde{X}$. This follows from
  Theorem~\ref{thm:pf_sch_adj} and~\cite[IV.8.12.3]{EGA}. Proving directly
  that the image is open, that $X\to\widetilde{X}$ is proper and that
  $\widetilde{X}\to Y$ is quasi-finite is possible but not trivial.
  This construction also has the deficiency that we were
  unable to easily adapt it to locally noetherian formal schemes. 
\end{rem}
\subsection{Uniqueness and base change of generalized Stein factorizations}
In this subsection we address the uniqueness and base change
assertions in Theorem \ref{thm:pf_sch_adj}. Both assertions will also
be important for the proof of the existence of generalized Stein
factorizations in Theorem \ref{thm:pf_sch_adj}. 
\begin{lem}\label{lem:uni_gs}
  Let $\varphi \colon \fml{X} \to \fml{Y}$ be a morphism of locally
  noetherian formal schemes that admits a 
  factorization $\fml{X} \xrightarrow{\rho} \widetilde{\fml{X}}
  \xrightarrow{\tilde{\varphi}} \fml{Y}$ where $\rho$ is proper and
  $\tilde{\varphi}$ is locally quasi-finite and separated. Then
  $\rho$ is Stein if and only if the natural map:
  \[
  \Hom_{\fml{Y}}(\widetilde{\fml{X}},\fml{Z}) \to \Hom_{\fml{Y}}(\fml{X},\fml{Z})
  \]
  is bijective for every locally quasi-finite and separated morphism
  $s \colon \fml{Z} \to \fml{Y}$. In particular, for
  locally quasi-proper and separated morphisms, the
  generalized Stein factorization, if it exists, is unique.
\end{lem}
\begin{proof}
  A $\fml{Y}$-morphism $\widetilde{\fml{X}} \to \fml{Z}$ is equivalent to a
  section of the projection $\fml{Z}\times_{\fml{Y}} \widetilde{\fml{X}} \to
  \widetilde{\fml{X}}$. In particular, we may now replace $\fml{Y}$ with
  $\widetilde{\fml{X}}$ and $\fml{Z}$ with $\fml{Z}\times_{\fml{Y}}
  \widetilde{\fml{X}}$. Thus we have reduced the result to the situation where
  $\varphi$ is proper and $\widetilde{\fml{X}} = \fml{Y}$. 

  Now let $\fml{X} \xrightarrow{\beta'} \st{\fml{Y}}(\fml{X})
  \xrightarrow{b} \fml{Y}$ be the Stein 
  factorization of $\varphi$ (which exists because $\varphi$ is
  proper). Fix a $\fml{Y}$-morphism $\alpha \colon \fml{X} \to
  \fml{Z}$. Since $\fml{Z}$ is separated over $\fml{Y}$ and $\varphi$
  is proper, it follows that $\alpha$ is proper. Thus, there is a Stein
  factorization of $\alpha\colon \fml{X} \xrightarrow{\alpha'}
  \st{\fml{Z}}(\fml{X}) \xrightarrow{a}  \fml{Z}$. Note, however, that
  $\gamma \colon \st{\fml{Z}}(\fml{X}) \to \fml{Y}$ is
  quasi-finite and proper, thus finite \cite[III.4.8.11]{EGA}. Next
  observe that we have natural isomorphisms of coherent sheaves of
  $\Orb_{\fml{Y}}$-algebras:
  \[
  b_*\Orb_{\st{\fml{Y}}(\fml{X})} \cong b_*\beta'_*\Orb_{\fml{X}}
  \cong \gamma_*\alpha'_*\Orb_{\fml{X}} \cong
  \gamma_*\Orb_{\st{\fml{Z}}(\fml{X})}.
  \]
  Hence, there exists a unique $\fml{Y}$-isomorphism $\delta \colon
  \st{\fml{Y}}(\fml{X}) \to \st{\fml{Z}}(\fml{X})$ that is compatible
  with the data. If $\varphi$ is Stein, then $\st{\fml{Y}}(\fml{X}) =
  \fml{X}$ and $b=\Identity{\fml{X}}$. We deduce that there is a uniquely induced
  $\fml{Y}$-morphism $\fml{Y} \xrightarrow{\delta}
  \st{\fml{Z}}(\fml{X}) \xrightarrow{a} \fml{Z}$. Conversely, we have
  a Stein factorization $\fml{X} \to \st{\widetilde{\fml{X}}}(\fml{X}) \to
  \widetilde{\fml{X}}$. By the Stein case already considered, we see that
  $\st{\widetilde{\fml{X}}}(\fml{X})$ and $\widetilde{\fml{X}}$ both satisfy
  the same universal property for locally quasi-finite and separated
  $\fml{Y}$-schemes. The result follows.
\end{proof}
Fix a locally noetherian formal scheme $\fml{Y}$ and let
$\lqps{\fml{Y}}$ (resp.~$\lqfs{\fml{Y}}$) denote the category of
locally quasi-proper (resp.~locally quasi-finite) and separated
morphisms $\fml{X} \to \fml{Y}$. Lemma
\ref{lem:uni_gs} allows us to reinterpret Theorem \ref{thm:pf_sch_adj}
as the existence of a left adjoint: 
\[
\st{\fml{Y}} \colon \lqps{\fml{Y}} \to
\lqfs{\fml{Y}}
\]
to the inclusion $\lqfs{\fml{Y}} \subseteq \lqps{\fml{Y}}$ such that if $\fml{Z} \in \lqps{\fml{Y}}$, then:
\begin{enumerate}
\item the natural morphism $\fml{Z} \to \st{\fml{Y}}(\fml{Z})$ is proper;
\item if $\fml{Y}' \to \fml{Y}$ is flat, then we have a natural isomorphism:
  \[
  \st{\fml{Y}'}(\fml{Z}\times_{\fml{Y}}\fml{Y}') \cong \st{\fml{Y}}(\fml{Z})\times_{\fml{Y}} \fml{Y}'.
  \]
\end{enumerate}
We address property (2) immediately.
\begin{lem}\label{lem:st_bc_comp}
  Fix a cartesian diagram of locally noetherian formal schemes:
  \[
  \xymatrix@-0.8pc{\fml{X}' \ar[r]^{p'} \ar[d]_{\varphi'} & \fml{X}
    \ar[d]^{\varphi} \\ \fml{Y}' \ar[r]^p & \fml{Y}. }
  \]
  Suppose that $\varphi$ is separated and admits a generalized Stein
  factorization $\fml{X} \to \st{\fml{Y}}(\fml{X}) \to \fml{Y}$. Then
  $\varphi'$ admits a generalized Stein factorization $\fml{X}' \to
  \st{\fml{Y}'}(\fml{X}') \to \fml{Y}'$ and there is a naturally induced
  $\fml{Y}'$-morphism:
  \[
  h\colon\st{\fml{Y}'}(\fml{X}') \to \st{\fml{Y}}(\fml{X})\times_{\fml{Y}} \fml{Y}',
  \]
  which is a finite universal homeomorphism. In addition, if $p$ is
  flat, then the morphism $h$ is an isomorphism.
\end{lem}
\begin{proof}
  The morphism $\varphi'$ factors as $\fml{X}' \xrightarrow{\gamma}
  \st{\fml{Y}}(\fml{X}) \times_{\fml{Y}} \fml{Y}' \xrightarrow{\sigma}
  \fml{Y}'$ where $\sigma$ is locally quasi-finite and separated and
  $\gamma$ is proper. Next observe that the Stein factorization of
  $\gamma$ is the generalized Stein factorization of $\varphi'$. Thus
  $\gamma$ factors as:
  \[
  \fml{X}' \to \st{\fml{Y}'}(\fml{X}') \xrightarrow{h}
  \st{\fml{Y}}(\fml{X})\times_{\fml{Y}}\fml{Y}'.
  \]
  By Zariski's Connectedness Theorem (Corollary \ref{cor:zar_con_fml}) $\fml{X}'
  \to \st{\fml{Y}'}(\fml{X}')$ and $\gamma$ have geometrically
  connected fibers. It follows that $h$ is a finite universal
  homeomorphism (it is finite with geometrically connected fibers). In
  addition, if $p$ is flat, then since cohomology commutes with flat
  base change (Proposition \ref{prop:fml_bc}), $\gamma$ is already
  Stein, and so $h$ is an isomorphism.  
\end{proof}
\subsection{Local decompositions}
Before we prove Theorem \ref{thm:pf_sch_adj} it will be necessary to
understand the \'etale local structure of locally quasi-proper
morphisms. We accomplish this by generalizing the well-known structure
results that are available for locally quasi-finite and separated morphisms
\cite[IV.18.5.11c, IV.18.12.1]{EGA}. 
\begin{prop}\label{prop:decomp}
  Let $\varphi \colon \fml{X} \to \fml{Y}$ be an adic morphism of locally
  noetherian formal schemes that is locally of finite
  type and separated. Let $y\colon\spec(k)\to \fml{Y}$ be a point and suppose that  
  $Z$ is a connected component of the fiber $\fml{X}_y$. If $Z$ is
  proper, then there exists an adic \'etale neighborhood $(\fml{Y}',y') \to
  (\fml{Y},y)$ and an open and closed immersion $i
  \colon \fml{X}_Z
  \to \fml{X}\times_{\fml{Y}} \fml{Y}'$ where $\fml{X}_Z \to \fml{Y}'$ is proper and $(\fml{X}_Z)_{y'}\cong Z$.
\end{prop}
\begin{proof}
  We first prove the result in the setting of a morphism of locally
  noetherian \emph{schemes} $f \colon X \to Y$. We now immediately reduce
  to the case where $Y$ is affine and $f$ is 
  quasi-compact. Let $k_0\subseteq k$ denote the separable closure of the
  residue field $k(y)$ in $k$. Then the resulting morphism $X_y\to X_{k_0}$ induces a bijection
  on connected components~\cite[IV.4.3.2]{EGA}. So, we may replace $k$
  with $k_0$ and $Z$ by the corresponding connected component in $X_{k_0}$. 
  By standard limit and smearing out arguments
  \cite[IV.8]{EGA} we further reduce to the case where $Y$ is local
  and henselian with closed point $y$ and residue field $k$.

  By Chow's Lemma \cite[II.5.6.1]{EGA}, there exists a quasi-projective
  $Y$-scheme $W$ and a proper and surjective $Y$-morphism $p \colon W \to
  X$. If the result has been proved for $W \to Y$, then we claim that
  this implies the result for $f \colon X \to Y$. Indeed, each connected
  component of $p^{-1}(Z)$ is proper, thus we may write
  $W=W_{p^{-1}(Z)} \amalg W'$ where $W_{p^{-1}(Z)}\to Y$ is proper,
  and the connected components of $W_{p^{-1}(Z)}$
  coincide with the connected components of $(W_{p^{-1}(Z)})_y = p^{-1}(Z)$.
  Now take $X_Z =
  p(W_{p^{-1}(Z)})$ and $X' = p(W')$. Then $X_Z$ is proper over $Y$ and it remains to show that $X_Z
  \cap X' = \emptyset$. Note that $X_Z \cap X'$ is a closed subset of
  $X_Z$. If it is non-empty, then it must have non-empty intersection
  with $Z$. But this would imply that $Z \cap X' \neq \emptyset$,
  hence $p^{-1}(Z) \cap W'\neq \emptyset$---a contradiction. We
  deduce that it remains to prove the result when $X$ is
  also quasi-projective over $Y$.

  In this case, there is an open and dense immersion $i \colon X
  \hookrightarrow \bar{X}$ where $\bar{X}$ is a projective
  $Y$-scheme. Note that if $V$ is a connected component of $\bar{X}_y$
  and $V\cap Z \neq \emptyset$, then $V\cap Z = V$ since $Z$ is proper. By
  Corollary \ref{cor:hens_pair_prop}, there is also a bijection between the set of
  connected components of $\bar{X}_y$ and $\bar{X}$. Set
  $X_Z$ to be the union of those connected components of
  $\bar{X}$ that meet $Z$. Observe that $X_Z \cap X$ is an open
  subset of $X_Z$ which contains $(X_Z)_y = Z$. Since $X_Z \to Y$ is
  proper and $Y$ is a local scheme, we have that $X_Z \cap X=X_Z$ and we
  conclude that $X_Z \subseteq X$. In particular, we readily deduce
  that $X_Z$ is an open and closed subset of $X$ and we have the
  claimed decomposition $X = X_Z \amalg X'$ with $(X_Z)_y = Z$. 

  Now let $\varphi \colon \fml{X} \to \fml{Y}$ be as in the
  Proposition. Let $f \colon X \to Y$ denote the induced morphism on the
  underlying schemes. By what we have proven for schemes, there is
  an \'etale neighborhood $(Y',y') \to (Y,y)$ and an open and closed immersion $i \colon X_Z \to X\times_Y Y'$ where
  $X_Z \to Y'$ is proper and $(X_Z)_{y'} \cong Z$. By
  \cite[IV.18.1.2]{EGA}, there is a unique lift of the \'etale
  neighborhood $(Y',y') \to (Y,y)$ to an adic \'etale neighborhood
  $(\fml{Y}',y') \to (\fml{Y},y)$ whose underlying morphism of pointed
  schemes coincides with $(Y',y') \to (Y,y)$. The result follows. 
\end{proof}
\begin{rem}\label{rem:non-noeth}
  In future work we will prove Proposition \ref{prop:decomp} for
  non-separated morphisms of non-noetherian algebraic
  stacks. We do not treat this here
  as the necessary reformulations in the
  non-locally-separated case would take us too far afield. 
\end{rem}
We conclude this subsection with a trivial, but important, corollary.
\begin{cor}\label{cor:decomp}
  Let $\varphi \colon \fml{X} \to \fml{Y}$ be a morphism of locally
  noetherian formal schemes that is separated and locally
  quasi-proper. Then there exists an adic \'etale morphism
  $\fml{Y}' \to \fml{Y}$, and an open and closed immersion $i \colon \fml{U}
  \hookrightarrow \fml{X}\times_{\fml{Y}} \fml{Y}'$ such that the
  composition $\fml{U} \to \fml{X}\times_{\fml{Y}} \fml{Y}' \to
  \fml{X}$ is surjective and the composition $\fml{U} \to
  \fml{X}\times_{\fml{Y}} \fml{Y}' \to \fml{Y}'$ is proper.
\end{cor}
Thus Corollary \ref{cor:decomp} implies that any
separated and locally quasi-proper morphism $\varphi \colon \fml{X} \to
\fml{Y}$ admits an \'etale slice 
that has a generalized Stein factorization. Thus in order to
construct the generalized Stein factorization of $\varphi$, we can use
\'etale descent. The problem now is thus the description of 
the relevant descent data. In order to this, we will address
a more general problem in \S\ref{subsec:et_mor}: for a proper and Stein
morphism, which \'etale morphisms are pulled back from the base? 
\subsection{Pullbacks of \'etale morphisms}\label{subsec:et_mor}
Let $\varphi \colon \fml{X} \to \fml{Y}$ be a proper and Stein morphism of
locally noetherian formal schemes. In this subsection we will identify
those \'etale morphisms to $\fml{X}$ that are pulled back from
$\fml{Y}$. 

So, let $\fml{S}$ be a
locally noetherian formal scheme. Define $\Etsep{\fml{S}}$ to be the
category of morphisms $(\fml{V} \to \fml{S})$ that are adic \'etale
and separated. Also, define $\Of{\fml{S}}$ to be the set of open and
closed immersions into $\fml{S}$. If $S\subseteq \fml{S}$ denotes the underlying scheme
of $\fml{S}$, then the natural map $\Of{\fml{S}} \to \Of{S}$ is
bijective. More generally, an easy consequence of \cite[IV.18.1.2]{EGA} is
that the functor $\Etsep{\fml{S}} \to \Etsep{S}$ is an equivalence of
categories. 

Let $\varphi \colon \fml{X} \to \fml{Y}$ be an adic morphism of
locally noetherian formal schemes. There is an induced functor 
\[
\varphi^* \colon \Etsep{\fml{Y}} \to \Etsep{\fml{X}},\quad (\fml{V} \to
\fml{Y}) \mapsto (\fml{V}\times_{\fml{Y}} \fml{X} \to \fml{X}).
\]
Let $\Etadsep{\fml{X}}{\fml{Y}}$ be the full subcategory of
$\Etsep{\fml{X}}$ with objects those $\fml{W} 
\to \fml{X}$ such that for any geometric point $\bar{y}$ of $\fml{Y}$,
and any connected component $Z$ of $\fml{W}_{\bar{y}}$, the induced
morphism $Z \to \fml{X}_{\bar{s}}$ is an 
open and closed immersion. Note that $\Of{\fml{X}} \subseteq
\Etadsep{\fml{X}}{\fml{Y}}$. In the following Lemma, we characterize the
essential image of $\varphi^*$. 
\begin{lem}\label{lem:et_gen_st_fml}
  Let $\varphi \colon \fml{X} \to \fml{Y}$ be a proper and Stein morphism
  of locally noetherian formal schemes. 
  \begin{enumerate}
  \item\label{item:et_gen_st_fml:char} The functor $\varphi^* \colon
    \Etsep{\fml{Y}} \to \Etsep{\fml{X}}$ is fully faithful with image
    $\Etadsep{\fml{X}}{\fml{Y}}$.
  \item\label{item:et_gen_st_fml:of} The induced map $\Of{\fml{Y}} \to
    \Of{\fml{X}}$ is bijective.
  \item\label{item:et_gen_st_fml:stein} If $\fml{W} \in
    \Etadsep{\fml{X}}{\fml{Y}}$, then it admits a generalized Stein
    factorization $\fml{W} \to \st{\fml{Y}}(\fml{W}) \to \fml{Y}$,
    with $\st{\fml{Y}}(\fml{W}) \to \fml{Y}$ \'etale and separated.
    Moreover, $\fml{W}=\fml{Y}\times_{\fml{X}}\st{\fml{Y}}(\fml{W})$.
  \end{enumerate}
\end{lem}
\begin{proof}
  For \itemref{item:et_gen_st_fml:char} and
  \itemref{item:et_gen_st_fml:of}, by passing to the underlying
  morphism of schemes we obtain, via Zariski's Connected Theorem
  (Corollary \ref{cor:zar_con_fml}), a proper and surjective morphism
  of locally noetherian schemes $f \colon X \to Y$ with geometrically
  connected fibers. By Proposition \ref{prop:et_prop_geom_conn} and
  Remark \ref{rem:et_sep_clopen}, the functor $f^* \colon \Etsep{Y} \to
  \Etsep{X}$ is fully faithful with image $\Etadsep{X}{Y}$ and the
  induced map $\Of{\fml{Y}} \to \Of{\fml{X}}$ is bijective. The claim
  \itemref{item:et_gen_st_fml:stein} follows from
  \itemref{item:et_gen_st_fml:char} and Lemma \ref{lem:st_bc_comp}. 
\end{proof}
\subsection{Existence of generalized Stein factorizations}\label{sec:gen_stein}
In this subsection we will prove 
Theorem \ref{thm:pf_sch_adj}. Before we can accomplish this we require
one more Lemma. This Lemma is well-known for schemes, though requires
some care for formal schemes. 
\begin{lem}\label{lem:fml_eq_relns_qf}
  Let $\fml{X}$ be a locally noetherian formal scheme and let $\fml{U}
  \to \fml{X}$ be an adic, locally quasi-finite, and separated
  morphism. Suppose that $[\fml{R} \rightrightarrows \fml{U}]$ is an
  adic flat equivalence relation over $\fml{X}$ such that $\fml{R} \to
  \fml{U}\times_{\fml{X}} \fml{U}$ is a closed immersion. Then this
  equivalence relation has a formal geometric quotient
  $\fml{Z}$ that is adic, locally quasi-finite, and separated over
  $\fml{X}$. Moreover, taking the quotient commutes with arbitrary (not
  necessarily adic) base change, the quotient map $\fml{U}\to \fml{Z}$ is adic
  and faithfully flat, and
  $\fml{R}=\fml{U}\times_{\fml{Z}} \fml{U}$.
\end{lem}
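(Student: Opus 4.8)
The plan is to build $\fml{Z}$ by forming the quotient truncation-by-truncation and then assembling a formal scheme. The construction is local on $\fml{X}$ (the quotient will be an fppf sheaf, and representability is local on the base), so I first reduce to the case $\fml{X} = \spf A$ with $A$ a noetherian adic ring and ideal of definition $\mathscr{I}$. Write $X_n = V(\mathscr{I}^{n+1})$, so that $\fml{X} = \varinjlim_n X_n$, and set $U_n = \fml{U}\times_{\fml{X}} X_n$ and $R_n = \fml{R}\times_{\fml{X}} X_n$. Because $\fml{U}\to\fml{X}$ and the two projections $s,t\colon \fml{R}\to\fml{U}$ are adic, these truncations recover $\fml{U}$ and $\fml{R}$ in the limit, and each $[R_n \rightrightarrows U_n]$ is an fppf equivalence relation of schemes over $X_n$: the projections $s_n,t_n$ are flat and locally of finite presentation, the map $R_n \to U_n\times_{X_n} U_n$ is a closed immersion (base change of one), and $U_n \to X_n$ is locally quasi-finite and separated.

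First I would produce the quotient at each finite level. Form the fppf sheaf quotient $Z_n := U_n/R_n$; since $s_n,t_n$ are fppf, $Z_n$ is an algebraic space, the map $q_n\colon U_n \to Z_n$ is fppf and surjective, and $U_n\times_{Z_n} U_n = R_n$ \cite{MR1771927}. Quasi-finiteness and local finite type descend along the fppf cover $q_n$ (the fibres of $Z_n\to X_n$ are quotients of the finite fibres of $U_n\to X_n$, hence finite), so $Z_n \to X_n$ is locally quasi-finite; and it is separated because its diagonal pulls back along the fppf surjection $U_n\times_{X_n} U_n \to Z_n\times_{X_n} Z_n$ to $R_n \hookrightarrow U_n\times_{X_n} U_n$, a closed immersion, so the diagonal is a closed immersion by descent. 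A separated, locally quasi-finite algebraic space over a scheme is a scheme \cite{MR1771927}; hence $Z_n \in \qf{X_n}$.

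Next I would check that these quotients are compatible across the thickenings. The quotient is a coequalizer of sheaves and pullback of sheaves preserves colimits, so for the closed immersion $j\colon X_n \hookrightarrow X_{n+1}$ one has $j^*Z_{n+1} = \mathrm{coeq}(j^*R_{n+1} \rightrightarrows j^*U_{n+1}) = \mathrm{coeq}(R_n \rightrightarrows U_n) = Z_n$; that is, $Z_n \cong Z_{n+1}\times_{X_{n+1}} X_n$. Thus the $Z_n$ form an adic system of schemes, with $Z_n$ the $n$-th infinitesimal neighbourhood of $Z_0$, and I set $\fml{Z} := \varinjlim_n Z_n$. Since $Z_0 \to X_0$ is a separated, locally quasi-finite morphism of schemes and the $Z_n$ are adic thickenings over the $X_n$, the object $\fml{Z}$ is a locally noetherian formal scheme and $\fml{Z}\to\fml{X}$ is adic, locally quasi-finite and separated, so $\fml{Z}\in\qf{\fml{X}}$; it carries a canonical map $q\colon \fml{U}\to\fml{Z}$ coequalizing $s$ and $t$.

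Finally I would verify that $(\fml{Z},q)$ is a uniform formal geometric quotient, arguing exactly as in the proof of Lemma \ref{lem:fin_loc_fml_col}. The topological condition holds because $q_0\colon U_0 \to Z_0$ is fppf, hence universally submersive, with $R_0 = U_0\times_{Z_0} U_0$, so $|Z_0|$ is the topological coequalizer of $|R_0|\rightrightarrows|U_0|$ and $|\fml{Z}|=|Z_0|$. The statement about functions is faithfully flat descent: at each level $\Orb_{Z_n}$ is the equalizer of $(q_n)_*\Orb_{U_n} \rightrightarrows (q_n s_n)_*\Orb_{R_n}$, and this passes to the limit because $\varpi^*$ is exact on coherent modules for an adic flat $\varpi$ and commutes with the finite limit, just as in Lemma \ref{lem:fin_loc_fml_col}; then $\fml{Z}$ is a genuine colimit in $\FMLSCHABS$ by Lemma \ref{lem:wgeom_fml_col}. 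Uniformity under adic flat base change is immediate, since the sheaf quotient commutes with arbitrary base change and flat base change preserves the descent statements above. The main obstacle is the middle step: establishing that the fppf quotient is representable by a \emph{scheme} at each finite level and that this representability is compatible with the infinitesimal thickenings — this is what lets the $Z_n$ glue to a formal scheme, and it relies on combining the algebraic-space quotient theory with the theorem that separated, locally quasi-finite algebraic spaces over a scheme are schemes.
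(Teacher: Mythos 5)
Your proposal is correct and follows essentially the same route as the paper's proof: truncate along the ideals of definition, form the fppf quotient at each finite level, invoke that a separated locally quasi-finite algebraic space over a scheme is a scheme to get $Z_n\in\qf{X_n}$, observe the system is adic, and assemble the colimit in topologically ringed spaces into the desired formal scheme. The only differences are cosmetic (your initial reduction to the affine case, and your more explicit verification of the uniform formal geometric quotient property, which the paper leaves terse).
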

\begin{proof}
  Let $\mathscr{I}$ be an ideal of definition for $\fml{X}$ and denote
  $X_{\mathscr{I}} = V(\mathscr{I})$, $U_{\mathscr{I}} = \fml{U}
  \times_{\fml{X}} X_{\mathscr{I}}$, and 
  $R_{\mathscr{I}} = \fml{R}\times_{\fml{X}} X_{\mathscr{I}}$. Then,
  the hypotheses ensure that 
  $[R_{\mathscr{I}} \rightrightarrows
    U_{\mathscr{I}}]$ is an fppf
  equivalence relation over $X_{\mathscr{I}}$. We observe that
  the quotient of this fppf equivalence relation in the category of
  algebraic spaces is a scheme $Z_{\mathscr{I}}$, as it is
  locally quasi-finite and separated over 
  $X_{\mathscr{I}}$ \cite[II.6.16]{MR0302647}. 
  Note that for any other ideal of definition
  $\mathscr{J} \supset \mathscr{I}$ there is a natural isomorphism
  $Z_{\mathscr{I}} \times_{\fml{X}} X_{\mathscr{J}} \cong 
  Z_{\mathscr{J}}$. Hence, the directed system
  $\{Z_{\mathscr{I}}\}_{\mathscr{I}}$ is adic over $\fml{X}$. Taking
  the colimit of this system in the category of topologically ringed
  spaces produces a locally noetherian formal scheme $\fml{Z}$, which
  is adic, locally quasi-finite, and separated over $\fml{X}$. It is
  now immediate that
  $\fml{Z}$ is 
  a formal geometric quotient of the given equivalence
  relation. That $\fml{Z}$ has the stated properties is an easy
  consequence of the corresponding results for its truncations
  $Z_{\mathscr{I}}$. 
\end{proof}
We now proceed to the proof of Theorem \ref{thm:pf_sch_adj}.
\begin{proof}[Proof of Theorem \ref{thm:pf_sch_adj}]
  The latter claims are consequences of Lemmas \ref{lem:uni_gs}
  and \ref{lem:st_bc_comp}, thus it remains to address the existence
  of generalized Stein factorizations.

  Note that if $\fml{Y} \to \fml{T}$ is locally quasi-finite and
  separated, then $\st{\fml{Y}}(\fml{X}) \cong \st{\fml{T}}(\fml{X})$
  over $\fml{T}$. Moreover, $\Etadsep{\fml{X}}{\fml{Y}} \homotopic
  \Etadsep{\fml{X}}{\fml{T}}$. Thus if $\varphi \colon \fml{X} \to \fml{Y}$ factors as
  $\fml{X} \to \fml{Y}' \to \fml{Y}$, where $\fml{X} \to \fml{Y}'$ is
  proper and $\fml{Y}' \to \fml{Y}$ is locally quasi-finite and separated, then the Stein factorization of $\fml{X} \to \fml{Y}'$ gives
  the generalized Stein factorization of $\fml{X} \to \fml{Y}$. 
  
  So, we fix a locally quasi-proper and separated morphism $\varphi \colon
  \fml{X} \to \fml{Y}$. By Corollary \ref{cor:decomp} there is an adic,
  \'etale, and separated morphism $\fml{Y}' \to 
  \fml{Y}$, together with an open and closed immersion $i \colon \fml{U}
  \hookrightarrow \fml{X}':=\fml{X}\times_{\fml{Y}} \fml{Y}'$, such
  that the induced morphism $\fml{U} \to \fml{Y}'$ is proper, and the
  induced morphism $\fml{U} \to \fml{X}$ is adic, \'etale, separated,
  and surjective. By the remarks above we obtain a generalized Stein
  factorization $\fml{U} \to \st{\fml{Y}}(\fml{U}) \to \fml{Y}$ (note
  that $\st{\fml{Y}}(\fml{U}) = \st{\fml{Y}'}(\fml{U})$). 

  Let $\fml{Y}'' = \fml{Y}'\times_{\fml{Y}} \fml{Y}'$ and take $s_1$,
  $s_2 \colon \fml{Y}'' \to \fml{Y}'$ to denote the two projections.  Let
  $\fml{X}'' = \fml{X}'\times_{\fml{X}} \fml{X}'$ and denote by $t_1$
  and $t_2$ the two projections. For $j=1$ and $2$ let $\fml{R}_j$
  denote the pullback of $\fml{U}$ along $t_j$. Note that the
  morphisms $i_j \colon \fml{R}_j \to \fml{X}''$ are open and closed
  immersions. Let
  $\fml{R} = \fml{R}_1 \cap \fml{R}_2=\fml{U}\times_\fml{X} \fml{U}$.
  Then we obtain
  an adic \'etale equivalence relation $[\fml{R} \rightrightarrows
  \fml{U}]$ with quotient $\fml{X}$.

  Note that both morphisms $\fml{R} \to \fml{U}$ belong to
  $\Etadsep{\fml{U}}{\fml{Y}}$. The morphism $\fml{U}
  \to \st{\fml{Y}}(\fml{U})$ is also proper and Stein. By Lemma
  \ref{lem:et_gen_st_fml}\itemref{item:et_gen_st_fml:char} we see that the functor
  $\Etsep{\st{\fml{Y}}(\fml{U})} \to \Etadsep{\fml{U}}{\fml{Y}}$ is an
  equivalence of categories (note that $\Etadsep{\fml{U}}{\fml{Y}}
  \homotopic
  \Etadsep{\fml{U}}{\st{\fml{Y}}(\fml{U})}$). Functoriality, together
  with Lemma \ref{lem:et_gen_st_fml}\itemref{item:et_gen_st_fml:stein}, produces an adic \'etale \emph{groupoid} $[\st{\fml{Y}}(\fml{R}) \rightrightarrows
  \st{\fml{Y}}(\fml{U})]$ in $\lqfs{\fml{Y}}$ which pulls back to the
  equivalence relation $[\fml{R} \rightrightarrows \fml{U}]$. We will now
  verify that
  $\st{\fml{Y}}(\fml{R}) \to
  \st{\fml{Y}}(\fml{U})\times_{\fml{Y}} \st{\fml{Y}}(\fml{U})$ is a closed
  immersion.

  Note that for $j=1$ and
  $2$ we have that $\fml{R} \in
  \Of{\fml{R}_j}$. By Lemma
  \ref{lem:et_gen_st_fml}(\ref{item:et_gen_st_fml:of},\ref{item:et_gen_st_fml:stein}) we thus see
  that $\st{\fml{Y}}(\fml{R}) \in \Of{\st{\fml{Y}}(\fml{R}_j)}$. We
  also know that $\st{\fml{Y}}(\fml{R}_j)$ is the pullback of
  $\st{\fml{Y}}(\fml{U}) \to \fml{Y}'$ along $s_j \colon \fml{Y}'' \to
  \fml{Y}'$ (Lemma \ref{lem:st_bc_comp}). Thus we obtain a $\fml{Y}$-isomorphism:
  \[
  \st{\fml{Y}}(\fml{R}_1) \times_{\fml{Y}''} \st{\fml{Y}}(\fml{R}_2)
  \to \st{\fml{Y}}(\fml{U}) \times_{\fml{Y}} \st{\fml{Y}}(\fml{U}). 
  \]
  We have already seen, however, that the map $\st{\fml{Y}}(\fml{R})
  \to \st{\fml{Y}}(\fml{R}_1)$ is an open and closed immersion.
  Since everything is separated, it follows that the natural map
  $\st{\fml{Y}}(\fml{R}) \to \st{\fml{Y}}(\fml{R}_1)\times_{\fml{Y}''}
  \st{\fml{Y}}(\fml{R}_2)=\st{\fml{Y}}(\fml{U}) \times_{\fml{Y}}
  \st{\fml{Y}}(\fml{U})$ is a closed immersion. Thus, we have an adic
  \'etale \emph{equivalence relation} $[\st{\fml{Y}}(\fml{R})
  \rightrightarrows \st{\fml{Y}}(\fml{U})]$ in $\lqfs{\fml{Y}}$
  satisfying the hypotheses of Lemma \ref{lem:fml_eq_relns_qf}.

  Let $\widetilde{\fml{X}}\in\lqfs{\fml{Y}}$ be the quotient of this equivalence
  relation. Since $\st{\fml{Y}}(\fml{U})\to \widetilde{\fml{X}}$ is adic \'etale
  and $\st{\fml{Y}}(\fml{R})=\st{\fml{Y}}(\fml{U})\times_{\widetilde{\fml{X}}}
  \st{\fml{Y}}(\fml{U})$, the proper and Stein morphism
  $\fml{U}\to\st{\fml{Y}}(\fml{U})$, with descent data given by
  $\fml{R}\to\st{\fml{Y}}(\fml{R})$, descends to a proper and Stein morphism
  $\fml{X}\to \widetilde{\fml{X}}$. This is the generalized Stein factorization of
  $\varphi$ by Lemma \ref{lem:uni_gs}.
\end{proof}


\section{Adjunctions for quasi-finite spaces}\label{ch:qfs}
If $p \colon X' \to X$ is a morphism of algebraic stacks, then there is a
pullback functor $p^* \colon \qfs{X} \to 
\qfs{X'}$ given by $(Z \to X) \mapsto (Z\times_X X' \to
X')$. More generally, given a closed subset $|V| \subseteq |X|$, set
$|V'| = p^{-1}|V|$. Then there 
is an induced pullback functor $\cmpl{p}^*_{/|V|} \colon \qfs{\cmpl{X}_{/|V|}} \to
\qfs{\cmpl{X}'_{/|V'|}}$. In this
short section we will use generalized Stein factorizations to
construct \emph{left} adjoints to these functors when $p$ is
proper and schematic (i.e., those proper morphisms that are
representable by schemes). This is certainly true in greater
generality, 
though we limit ourselves to this situation for simplicity. 
\begin{defn}
  Suppose that $p \colon X' \to X$ is a proper morphism of locally noetherian algebraic stacks. Let $|V| \subseteq |X|$
  be a closed subset and let $|V'| = p^{-1}|V|$. Let
  $\qfp{\cmpl{X}'_{/|V'|}}{X}$ denote 
  the full subcategory of $\qfs{\cmpl{X}'_{/|V'|}}$ consisting
  of those adic systems $(Z_n \to X'_n)_{n\geq 0}$ such that the
  composition $Z_0 \to X'_0\to X_0$ has proper fibers. 
\end{defn}
We now have the following Theorem. 
\begin{thm}\label{thm:prop_push_fml}
  Suppose that $p \colon X' \to X$ is a proper and schematic morphism of locally noetherian algebraic stacks. Let $|V| \subseteq |X|$ be a
  closed subset and let $|V'| = p^{-1}|V|$. Then the functor
  $\cmpl{p}_{/|V|}^* \colon 
  \qfs{\cmpl{X}_{/|V|}} \to \qfs{\cmpl{X}'_{/|V'|}}$ factors
  through $ \qfp{\cmpl{X}'_{/|V'|}}{X}$ and admits a left
  adjoint:
  \[
  (\cmpl{p}_{/|V|})_! \colon   \qfp{\cmpl{X}'_{/|V'|}}{X} \to
  \qfs{\cmpl{X}_{/|V|}},
  \]
  which is compatible with locally noetherian and flat base change on
  $X$. Moreover, for $(Z \to X') \in \qfp{X'}{X}$, the induced
  natural map: 
  \[
  (\cmpl{p}_{/|V|})_!\circ c_{X',|V'|}(Z\to X') \to c_{X,|V|}
  \circ p_!(Z \to X) 
  \]
  is an isomorphism. 
\end{thm}
\begin{proof}
  We wish to point out that if $|V| = |X|$, then $\cmpl{X}_{/|V|} =
  X$ and $\qfs{\cmpl{X}_{/|V|}} = \qfs{X}$. Thus we denote
  $\cmpl{p}^*_{/|V|}$ by $p^*$ and the functor $p_!$ (which appears at
  the end of the statement) is a left adjoint to $p^*$. 

  Now the first claim about the factorization is trivial. For the
  existence and properties of $(\cmpl{p}_{/|V|})_!$, by smooth
  descent, it is sufficient to prove 
  the result in the case where $p \colon X' \to X$ is a morphism of locally
  noetherian schemes. Set $\fml{X} = \cmpl{X}_{/|V|}$ and $\fml{X}' =
  \cmpl{X}'_{/|V'|}$ and let $\pi \colon \fml{X}' \to \fml{X}$ be the
  induced morphism. 

  Let $(Z_n \to X'_n)_{n\geq 0} \in \qfp{\fml{X}'}{\fml{X}}$ which we may also
  view as a quasi-finite and separated morphism of locally noetherian
  formal schemes $\fml{Z} \to \fml{X}'$ such that the composition
  $\fml{Z} \to \fml{X}' \to \fml{X}$ is locally quasi-proper and
  separated.  By Theorem \ref{thm:pf_sch_adj}, there is a
  generalized Stein factorization $\fml{Z} \xrightarrow{\rho}
  \st{\fml{X}}(\fml{Z}) \to \fml{X}$ and $\st{\fml{X}}(\fml{Z}) \to
  \fml{X}$ is quasi-finite and separated. We set $\pi_!\fml{Z} =
  \st{\fml{X}}(\fml{Z})$ and note that $\pi_!$ is a left adjoint to
  $\pi^*$ by Lemma \ref{lem:uni_gs}. The compatibility with
  flat base change and completions is implied by the flat base change
  property of $\st{\fml{X}}$, together with the fact that
  $\fml{X} \to X$ is a (non-adic) flat morphism \cite[I.10.8.9]{EGA}. 
\end{proof}


\section{The Existence Theorem}\label{ch:exthm}
\begin{defn}
  Fix a morphism of algebraic stacks $\pi \colon X \to Y$ and a closed
  subset $|Y_0| \subseteq |Y|$. Let $|X_0| = \pi^{-1}|Y_0|$. Define
  $\qfc{\cmpl{X}_{/|X_0|}}{Y}$ to be the full
  subcategory of $\qfp{\cmpl{X}_{/|X_0|}}{Y}$ consisting of 
  those $(Z_n\to X)_{n\geq 0}$ such that the composition $Z_0 \to X
  \to Y$ is proper.
\end{defn}
Note that $\qfc{\cmpl{X}_{/|X_0|}}{Y}$ is also a full subcategory of
$\qfsfin{\cmpl{X}_{/|X_0|}}$. That is, every morphism in
$\qfc{\cmpl{X}_{/|X_0|}}{Y}$ is finite.

Throughout this section we fix a noetherian ring $R$ and an ideal $I
\subseteq R$, such that $R$ is $I$-adic. Let $S = \spec R$ and $S_n = \spec
(R/I^{n+1})$. Consider a morphism of algebraic stacks $\pi \colon X \to S$
that is locally of finite type. For $n\geq 0$ let $X_n = X\times_S S_n$. It will be
convenient for us to abbreviate the symbol $\cmpl{X}_{/|X_0|}$ to $\cmpl{X}$. Consider the 
$I$-adic completion functor:     
\[
\Psi_{\pi,I} \colon \qfc{X}{S} \longrightarrow \qfc{\cmpl{X}}{S},\quad 
(Z \to X) \mapsto (Z\times_X X_n \to X_n)_{n\geq 0}.
\]
In this section we prove an Existence Theorem, which
gives sufficient conditions on the morphism $\pi \colon X \to S$ for the
functor $\Psi_{\pi,I}$ to be an equivalence of categories. 

In the case that $\pi \colon X \to S$ is \emph{separated}, the category $\qfc{X}{S}$ is equivalent to the category of finite algebras over $X$ with $S$-proper
support. An immediate
consequence of \cite[Thm.~A.1]{MR2239345} is
that the functor
$\Psi_{\pi,I}$ is an equivalence in this situation. Note that
[\emph{loc.\ cit.}]\ also gives a straightforward proof that $\Psi_{\pi,I}$
is fully faithful when $\pi$ is no longer assumed to be
separated. Indeed, we have
\begin{lem}\label{lem:fullyfaithful}
  Let $\pi \colon X \to S$ be a morphism of algebraic stacks that is
  locally of finite type with quasi-compact and separated
  diagonal. Then the functor $\Psi_{\pi,I}$ is fully faithful.
\end{lem}
\begin{proof}
  We begin with the following general observation: let $f \colon V \to W$
  be a \emph{representable} morphism of algebraic $S$-stacks. Let
  $\Sec{}{V/W}$ denote the set of sections to $f$. Denote by $f_n \colon
  V_n \to W_n$ the pullback of $f$ along $S_n \hookrightarrow S$. If
  $W$ is a proper algebraic stack over $S$, and $f$ is also separated
  and locally of finite type, then the natural map $\Sec{}{V/W} \to
  \varprojlim_n \Sec{}{V_n/W_n}$ is bijective. Indeed, a section $t
  \colon W \to V$ of $f$ is equivalent to a closed immersion $W \to
  W\times_S V$ such that the composition with the first projection is
  the identity. The algebraic $S$-stack $W\times_S V$ is also
  separated and \cite[4.6]{conradfmlgaga}
  now gives the claim. To see
  that the functor $\Psi_{\pi,I}$ is fully faithful, we simply 
  observe that if $Z$, $\widetilde{Z} \in \qfc{X}{S}$, then 
  \[
  \Hom_{\qfc{X}{S}}(Z,\widetilde{Z}) = \Sec{}{[Z\times_X \widetilde{Z}]/Z}.\qedhere
  \]
\end{proof}
Proving that $\Psi_{\pi,I}$ is an equivalence when $\pi$ is not
necessarily separated is the main technical contribution of this paper. In this
section, we will prove
\begin{thm}\label{thm:exthm}
  Let $\pi \colon X \to S$ be a morphism of algebraic stacks that is
  locally of finite type with quasi-compact and separated
  diagonal and affine stabilizers. Then 
  $\Psi_{\pi,I}$ is an equivalence.   
\end{thm}
Some interesting special cases of stacks with affine stabilizers are:
\begin{enumerate}
\item global quotient stacks,
\item stacks of global type \cite[Defn.~2.1]{rydh-2009}, 
\item stacks with quasi-finite diagonal,
\item algebraic spaces, and
\item schemes.
\end{enumerate}
We wish to point out that Theorem \ref{thm:exthm} is even new for
schemes. As outlined in \S\ref{subsec:outline}, we will prove Theorem
\ref{thm:exthm} by a d\'evissage on the non-abelian category
$\qfc{X}{S}$. This d\'evissage is combined with the Raynaud--Gruson Chow
Lemma \cite[Cor.~5.7.13]{MR0308104} to reduce to proving that
$\Psi_{\pi,I}$ is essentially surjective for a very special
class of morphisms of schemes $\pi \colon X \to S$. Before we get into our
main lemmas to set up the d\'evissage, the following definitions will be useful. 

Let $\pi \colon X \to S$ be a morphism of algebraic stacks that is locally
of finite type with quasi-compact and separated diagonal. We say that $(Z_n \to X_n)_{n\geq 0}\in \qfc{\cmpl{X}}{S}$ is
\textbf{effectivizable} if it lies in the essential image of
$\Psi_{\pi,I}$. If $(Z_n \to X_n)_{n\geq 0} \in
\qfc{\cmpl{X}}{S}$ is effectivizable, we say that an object $(Z \to X) \in
\qfc{X}{S}$ together with an isomorphism $e \colon \Psi_{\pi,I}(Z \to X)
\to (Z_n\to X_n)_{n\geq 0}$ is an \textbf{effectivization} of $(Z_n
\to X_n)_{n\geq 0}$. Note that given two effectivizations $((Z\to X),e)$,
$((Z'\to X),e')$ of
an effectivizable $(Z_n \to X_n)_{n\geq 0}$, then there exists a
unique isomorphism $\alpha \colon (Z' \to X) \to (Z \to X)$ such that
$e \circ \Psi_{\pi,I}(\alpha) = e'$ (Lemma \ref{lem:fullyfaithful}).

Let $(\varphi_n)_{n\geq 0} \colon (Z'_n \to X_n)_{n\geq 0}
\to (Z_n \to X_n)_{n\geq 0}$ be a morphism in
$\qfc{\cmpl{X}}{S}$. Let $J \subseteq \Orb_X$  be a coherent
sheaf of ideals. If $X$ is a scheme, then $(\varphi_n)$ corresponds to
a finite morphism of locally noetherian formal schemes
$\varphi \colon \fml{Z}' \to \fml{Z}$. Let
$\varphi^\sharp\colon\Orb_{\fml{Z}}\to\varphi_*\Orb_{\fml{Z}'}$ denote the
corresponding homomorphism. We say that $\varphi$ is
\textbf{$J$-admissible} if $J_{\fml{Z}} \cap \ker
\varphi^\sharp = 0$ and $\coker 
\varphi^\sharp$ is annihilated by $J_{\fml{Z}}$. The last condition is
equivalent to $\varphi_* J_{\fml{Z}'}\subseteq \im \varphi^\sharp$.
We say that
$\varphi$ is \textbf{strongly $J$-admissible} if $J_{\fml{Z}} \cap \ker
\varphi^\sharp = 0$ and $\varphi_*J_{\fml{Z}'}=\varphi^\sharp(J_{\fml{Z}})$.
If $X$ is any algebraic stack, locally of finite type
over $S$, then we say that $(\varphi_n)_{n\geq 0}$ is
$J$-admissible (resp.\ strongly $J$-admissible) if there exists a smooth
surjection from a
scheme $V \to X$ such that the adic system $(\varphi_n)_V : Z'_{V_n} \to Z_{V_n}$
induces a $J$-admissible (resp.\ strongly
$J$-admissible) morphism $\varphi_V\colon \fml{Z}'_V \to \fml{Z}_V$ of formal
schemes.
This notion does not depend on $V\to X$.

Our d\'evissage methods hinge on the
following two lemmas. Our first Lemma forms
the analogue of Step \itemref{dev:step_bir} given in
\S\ref{subsec:outline}. It is here that we first utilize Corollary \ref{cor:fml_pushouts_commute_stks}.
\begin{lem}\label{lem:jadm_eff}
  Let $\pi \colon X \to S$ be a morphism of algebraic stacks that is
  locally of finite type with quasi-compact and separated diagonal. Let
  $J \subseteq \Orb_X$ be a coherent ideal and let
  $(\varphi_n)_{n\geq 0} \colon (Z'_n \to X_n)_{n\geq 0} \to (Z_n \to
  X_n)_{n\geq 0}$ be a $J$-admissible morphism in
  $\qfc{\cmpl{X}}{S}$. Suppose that $(Z'_n \to X_n)_{n\geq 0}$ and
  $(Z_n\times_X V(J) \to X_n)_{n\geq 0}$ are effectivizable. Then $(Z_n
  \to X_n)_{n\geq 0}$ is effectivizable. 
\end{lem}
\begin{proof}
  There exists a cocartesian diagram in $\qfs{\cmpl{X}}$ (Corollary \ref{cor:fml_pushouts_commute_stks}):
  \[
  \xymatrix{(Z'_n \times_X V(J) \to X_n)_{n\geq 0} \ar@{^(->}[r]
    \ar[d]  & (Z'_n \to X_n)_{n\geq 0} \ar[d]^{(\varphi_n)_{n\geq 0}} \\ (Z_n \times_X V(J) \to X_n)_{n\geq 0}
    \ar@{^(->}[r]  & (W_n \to X_n)_{n\geq 0},} 
  \]
  where $(W_n \to X_n)_{n\geq 0} \in \qfc{X}{S}$ is
  effectivizable to $(W \to X)$. The universal properties show 
  that there is a uniquely induced morphism $(b_n)_{n\geq 0} \colon (W_n
  \to X_n)_{n\geq 0} \to
  (Z_n \to X_n)_{n\geq 0}$ such that each $b_n$ is finite. We will show
  that $(b_n)$ is strongly $J$-admissible and that if $(\varphi_n)$ is strongly
  $J$-admissible, then $(b_n)$ is an isomorphism. It follows that $(Z_n)$ is
  the pushout of an effectivizable diagram and is thus effectivizable.

  That $(b_n)$ is strongly $J$-admissible (resp.\ an isomorphism) can be
  verified smooth-locally on $X$. Thus after replacing $X$
  by an affine and noetherian scheme, we may pass from the adic
  systems above to noetherian formal schemes. That is,
  $(\varphi_n)_{n\geq 0}$ is given by a finite morphism of noetherian formal
  schemes $\varphi \colon (\fml{Z'} \to \cmpl{X}) \to (\fml{Z} \to
  \cmpl{X})$.

  By assumption, $\varphi$ is $J$-admissible (resp.\ strongly
  $J$-admissible), so
  we also know that $J_{\fml{Z}} \cap \ker \varphi^\sharp = 0$ and
  that $\varphi_* J_{\fml{Z}'}\subseteq \im \varphi^\sharp$
  (resp.\ $\varphi_* J_{\fml{Z}'}=\varphi^\sharp(J_{\fml{Z}})$). Moreover,
  $(b_n)_{n\geq 0}$ is given by a finite morphism of 
  noetherian formal schemes $\beta \colon \fml{W} \to \fml{Z}$.

  That $\beta$ is strongly $J$-admissible (resp.\ an isomorphism) is Zariski
  local on $\fml{Z}$.
  Thus, we may assume that $\fml{Z} = \spf_{IA} A$, $\fml{Z}' =
  \spf_{IA'} A'$, and that $\varphi$ is given by a finite morphism $f \colon A
  \to A'$. Let $B=A/JA \times_{A'/JA'} A'$. By Theorem
  \ref{thm:fml_pushouts_commute}, there is a natural isomorphism
  $\fml{W} \cong \spf_{IA} B$.
  It remains to prove that the natural map $A \to B$
  is strongly $J$-admissible (resp.\ an isomorphism).
  To this end we form the
  commutative diagram with exact rows: 
\[
\xymatrix@C+5mm{0 \ar[r] & JA \ar[r]^-{m\mapsto (m,0)} \ar[d]^f & \ar[d]^{d_1}
  A \times A' \ar[r] & A/JA \times A'\ar[d]^{d_2}
  \ar[r] & 0\\
  0 \ar[r] & JA' \ar[r] & A' \ar[r] & A'/JA' \ar[r] & 0,}
\]
where
$d_1(a,a') = f(a) - a'$ and $d_2(\bar{a},a') =
\bar{f}(\bar{a}) - \bar{a'}$.
By the Snake Lemma, there is an exact sequence of $A$-modules:
\[
\xymatrix{0 \ar[r] & JA \cap \ker(f) \ar[r] & A \ar[r] & B
  \ar[r] & JA'/f(JA) \ar[r] &0.}
\]
By hypothesis, $JA\cap \ker(f) = 0$ and $JA'\subseteq f(A)$
(resp.\ $JA'=f(JA)$). Thus, if $\varphi$ is strongly $J$-admissible, then
$A \to B$ is an isomorphism. If $\varphi$ is only
$J$-admissible, then $A\to B$ is injective and $JB\subseteq A$. Furthermore,
$JB\subseteq (0,JA')$ and $B/(0,JA')=A/JA$. It follows that $A/JA\to B/JB$ is
injective so that $JB=JA$ and $A\to B$ is strongly
$J$-admissible.
\end{proof}
In order to apply Lemma \ref{lem:jadm_eff} we have our next
Lemma. This is the analogue of \cite[III.5.3.4]{EGA} in
our situation; forming part of Step \itemref{dev:step_adj} in the outline
given in \S\ref{subsec:outline}. To
prove this Lemma, we must combine
the main techniques of the paper developed thus far---specifically
Corollary \ref{cor:fml_pushouts_commute_stks} and Theorem
\ref{thm:pf_sch_adj}.
\begin{lem}\label{lem:new_patch}
  Let $X$ be an algebraic stack of finite type over $S$ with
  quasi-compact and separated diagonal.
  Let $(Z_n \to X_n)_{n\geq 0} \in \qfc{\cmpl{X}}{S}$.
  Suppose that there is a proper and schematic morphism
  $p \colon X' \to X$ and an open substack $U \subseteq X$
  such that the morphism $p^{-1}(U) \to U$ is finite and
  flat.
  Assume, in addition, that $\cmpl{p}^*(Z_n \to X_n)_{n\geq 0} \in
  \qfc{\cmpl{X}'}{S}$ is effectivizable. Then there exists a coherent
  ideal $J \subseteq \Orb_X$, with $|\supp(\Orb_X/J)|$ equal to the
  complement of $U$ in $X$, and a $J$-admissible morphism $(\varphi_n)_{n\geq 0} \colon
  (\widetilde{Z}_n \to X_n)_{n\geq 0} \to (Z_n \to X_n)_{n\geq 0}$ with
  $(\widetilde{Z}_n \to X_n)_{n\geq 0}$ effectivizable.
\end{lem}
\begin{proof}
  Let $X'' = X'\times_X X'$. For $j=1$ and $2$ let $s_j \colon X'' \to X'$
  denote the $j$th projection. Let $q = p\circ s_1$ and observe that
  there is a $2$-morphism $\alpha \colon q \Rightarrow p \circ s_2$. By
  hypothesis, $\cmpl{p}^*(Z_n \to X_n)_{n\geq 0}$ admits an effectivization $(Z' \to X') \in \qfc{X'}{S}$. By Theorem \ref{thm:prop_push_fml}, for
  $j=1$ and $2$, there exist natural morphisms $\eta_j \colon (s_j)_!(s_j)^*Z' \to
  Z'$ in $\qfc{X'}{S}$. By Theorem \ref{thm:prop_push_fml}, there are also natural morphisms $p_!(\eta_j) \colon p_!(s_j)_!(s_j)^*Z' \to
  p_!Z'$ in $\qfc{X}{S}$. Moreover, there is a natural isomorphism of functors
  $p_!(s_1)_! \Rightarrow q_!$. The $2$-morphism $\alpha$ also induces
  a natural isomorphism of functors $p_!(s_2) \Rightarrow q_!$ and an
  isomorphism $\Psi_{\pi\circ q,I}(s_1^*Z'_1) \cong \Psi_{\pi \circ
    q,I}(s_2^*Z'_2)$ in $\qfc{\cmpl{X}''}{S}$, where $\pi \colon X \to S$
  denotes the structure morphism of $X$. By Lemma
  \ref{lem:fullyfaithful}, we obtain an isomorphism $\nu \colon s_1^*Z'_1
  \to s_2^*Z'_2$ in $\qfc{X''}{S}$. Let $Z''= s_1^*Z'$. Combining the afforementioned isomorphisms, we obtain for $j=1$ and $2$ natural
  morphisms $t_j \colon q_!Z'' \to p_!Z'$ in $\qfc{X}{S}$. The morphisms $t_j$ are
  finite, thus the coequalizer diagram $[q_!Z'' \rightrightarrows
  p_!Z']$ has a colimit, $\widetilde{Z}$ in $\qfc{X}{S}$ (Theorem
  \ref{thm:pushouts}). By Theorem \ref{thm:prop_push_fml}
  there are natural isomorphisms:
  \begin{align*}
    \Psi_{\pi,I}(p_!Z') &\cong \cmpl{p}_!(Z'\times_{X'} X'_n \to
    X'_n)_{n\geq 0} \cong \cmpl{p}_!\cmpl{p}^*(Z_n \to X_n)_{n\geq
      0}\\
    \Psi_{\pi,I}(q_!Z') &\cong \cmpl{q}_!(Z''\times_{X''} X''_n \to
    X''_n)_{n\geq 0} \cong \cmpl{q}_!\cmpl{q}^*(Z_n \to X_n)_{n\geq 0}.
  \end{align*}
  Moreover, the morphism $\cmpl{p}_!\cmpl{p}^*(Z_n \to X_n)_{n\geq
    0} \to (Z_n \to X_n)_{n\geq 0}$ coequalizes the two morphisms
  $\cmpl{q}_!\cmpl{q}^*(Z_n\to X_n)_{n\geq 0} \rightrightarrows
  \cmpl{p}_!\cmpl{p}^*(Z_n \to X_n)_{n\geq 0}$. By Corollary
  \ref{cor:fml_pushouts_commute_stks}, there is a uniquely induced
  morphism $(\varphi_n)_{n\geq 0} \colon (\widetilde{Z}\times_X X_n \to X_n)_{n\geq
    0} \to (Z_n \to X_n)_{n\geq 0}$ in $\qfc{\cmpl{X}}{S}$. 

  It remains to prove that $(\varphi_n)_{n\geq 0}$ is $J$-admissible for a
  suitable ideal $J$. First,
  let $J\subseteq \Orb_X$ be any coherent ideal defining the complement of $U$.
  We will show that $(\varphi_n)_{n\geq 0}$ is $J^N$-admissible for sufficiently
  large $N$. Note
  that the verification of this is local on $X$ for the smooth
  topology. Indeed, by Corollary \ref{cor:fml_pushouts_commute_stks}
  and Theorem \ref{thm:prop_push_fml}, the construction of
  $(\varphi_n)_{n\geq 0}$ is compatible with smooth base change on $X$. So,
  we may henceforth assume that $X=\spec B$, where $B$ is an
  $R$-algebra of finite type, and that $(Z_n \to
  X_n)_{n\geq 0}$ is given by a morphism of locally noetherian formal
  schemes $(\fml{Z} \to \cmpl{X})$. We next observe that we may work
  Zariski locally on $\fml{Z}$. To see this, we note that the morphism
  $\cmpl{p}_!\cmpl{p}^*\fml{Z} \to \fml{Z}$ is just the Stein
  factorization of $\cmpl{p}^*\fml{Z} \to \fml{Z}$ (Lemma
  \ref{lem:uni_gs}) and this is compatible with flat base change on
  $\fml{Z}$ (Lemma \ref{lem:st_bc_comp}), and similarly for the
  morphisms $\cmpl{q}_!\cmpl{q}^*\fml{Z} \rightrightarrows
  \cmpl{p}_!\cmpl{p}^*\fml{Z}$. Also, let $\widetilde{Z}^{\wedge} =
  \widetilde{\fml{Z}}$ and let $(\varphi_n)_{n\geq 0}$ be given by the finite
  morphism $\varphi \colon \fml{\widetilde{Z}} \to \fml{Z}$. Then Theorem
  \ref{thm:fml_pushouts_commute} implies that $\Orb_{\widetilde{\fml{Z}}} =
  \ker(\Orb_{\cmpl{p}_!\cmpl{p}^*\fml{Z}} \xrightarrow{\cmpl{t}^\sharp_1 -
    \cmpl{t}^\sharp_2} \Orb_{\cmpl{q}_!\cmpl{q}^*\fml{Z}})$. Thus the
  formation of $\widetilde{\fml{Z}}$ is also compatible with flat 
  base change on $\fml{Z}$. Consequently, we may assume that $\fml{Z}
  = \spf_{IA} A$, for some $B$-algebra $A$ that is $IA$-adically
  complete. Let $Z = \spec A$, then the morphism $\cmpl{Z} \to
  \cmpl{X}$ induces a morphism of schemes $Z \to X$. 

  Let $p_Z \colon Z' \to Z$ (resp.~$q_Z \colon Z'' \to Z$) denote the
  pullback of $p$ (resp.~$q$) along $Z \to X$. Let $A' =
  \Gamma(Z',\Orb_{Z'})$ and $A'' = \Gamma(Z'',\Orb_{Z''})$
  (which are both coherent $A$-modules because $p_Z$ and $q_Z$ are
  proper). Then there are two induced morphisms $d_1$, $d_2 \colon A' \to
  A''$ and $\widetilde{A} := \Gamma(\widetilde{Z},\Orb_{\widetilde{Z}}) = \ker(A'
  \xrightarrow{d_1-d_2} A'')$. The morphism $\varphi \colon \fml{\widetilde{Z}}
  \to \fml{Z}$ induces a natural finite map $f \colon A \to \widetilde{A}$.

  It remains to show that we can find an
  $N\gg 0$ such that $(J^NA) \cap \ker f = 0$ and $J^NA$
  annihilates $\coker f$. Now, on the complement, $U_Z$, of the support of
  $A/JA$ on $\spec A$, the morphism $p_Z^{-1}(U_Z) \to U_Z$ is finite and
  flat. So, if $g\in JA$, then $A''_g = A'_g \tensor_{A_g}
  A'_g$. By finite flat descent, $\varphi_g \colon A_g \to
  \widetilde{A}_g$ is an isomorphism. Since $A$ is noetherian, we deduce
  immediately that the kernel and cokernel of $\varphi$ are
  annihilated by $J^NA$ for some $N\gg 0$. By the Artin--Rees Lemma
  \cite[0\textsubscript{I}.7.3.2.1]{EGA}, and by possibly increasing $N$, we
  can ensure that $(J^NA) \cap \ker f = 0$. The result follows.
\end{proof}
We may finally prove Theorem \ref{thm:exthm}.
\begin{proof}[Proof of Theorem \ref{thm:exthm}]
By Lemma \ref{lem:fullyfaithful}, it remains to show that
the functor ${\Psi}_{\pi,I}$ is essentially
surjective. We divide the proof of this into a number of cases.

\noindent{\bf Basic case.} Let $\pi \colon X \to S$ be a morphism of schemes that is of finite type and factors as $X
\xrightarrow{f} P \xrightarrow{g} S$, 
where  $f$ is \'etale and $g$ is projective. Let $(Z_n \to 
X_n)_{n\geq 0} \in \qfc{\cmpl{X}}{S}$, which we may regard as a
quasi-finite and separated morphism of locally noetherian formal
schemes $(\fml{Z} \to \cmpl{X})$. Then the composition
$\fml{Z} \to \cmpl{X} \to \cmpl{P}$ is
quasi-finite and proper, 
  since $P$ is separated and $\fml{Z}$ is $S$-proper. Hence,
  the morphism $\fml{Z} \to \cmpl{P}$ is finite
  \cite[III.4.8.11]{EGA}. As $g
  \colon P \to S$ is projective, there is a finite $P$-scheme  
  $Z$ such that
  $(\cmpl{Z} \to \cmpl{P}) \cong (\fml{Z} \to \cmpl{P})$
  in $\qfc{\cmpl{P}}{S}$  
  \cite[III.5.4.4]{EGA}. The morphism $f$ is \'etale, thus Corollary
  \ref{cor:sections_proper_bc} implies that there is a unique $P$-morphism
  $Z \to X$ lifting the given $P_0$-morphism $Z_0 \to X_0$. Thus we have
  obtained $(Z \to X) \in \qfc{X}{S}$ with completion isomorphic to
  $\fml{Z}$ over $\cmpl{X}$. We conclude that the functor $\Psi_{\pi,I}$ is an
  equivalence of categories in this case.

\noindent{\bf Quasi-compact with quasi-finite and separated diagonal
  case.} Let $\pi 
  \colon X \to S$ be a morphism of algebraic stacks that is of finite type with quasi-finite and separated
  diagonal. We now prove that the functor $\Psi_{\pi,I}$ is an
  equivalence by noetherian induction on the closed subsets of
  $|X|$. Thus, for any closed immersion $i \colon V \hookrightarrow X$ with
  $|V| \subsetneq |X|$, we may assume that the functor $\Psi_{\pi\circ
    i,I} \colon \qfc{V}{S} \to \qfc{\cmpl{V}}{S}$ is an equivalence. 

  Let $(Z_n \to X_n)_{n\geq 0} \in 
  \qfc{X}{S}$. Then there exists a
  $2$-commutative diagram:
  \[
  \xymatrix@-0.8pc{X \ar[d]_\pi & \ar[l]_p X' \ar[d]^f \\ S & \ar[l]_g P,}
  \]
  such that $g$ is projective, $f$ is \'etale, $X'$ is a scheme, and
  $p$ is proper, schematic, surjective, and over a dense open subset
  $U \subseteq X$ the morphism $p^{-1}(U) \to U$ is finite and
  flat \cite[Thm.~8.9]{rydh-2009}. By the Basic Case considered above, $\cmpl{p}^*(Z_n \to
  X_n)_{n\geq 0} \in \qfc{\cmpl{X}'}{S}$ is effectivizable. By Lemma
  \ref{lem:new_patch}, there exists a coherent ideal $J \subseteq
  \Orb_X$, with $|\supp(\Orb_X/J)|$ equal to the complement of $U$ in
  $X$, and a $J$-admissible morphism $(\varphi_n)_{n\geq 0} \colon (\widetilde{Z}_n
  \to X_n)_{n\geq 0} \to (Z_n \to X_n)_{n\geq 0}$ with $(\widetilde{Z}_n
  \to X_n)_{n\geq 0}$ effectivizable. By noetherian induction, 
  $(Z_n\times_X V(J) \to X_n)_{n\geq 0} \in \qfc{\cmpl{X}}{S}$ is
  effectivizable. By Lemma \ref{lem:jadm_eff}, $(Z_n \to X_n)_{n\geq
    0}$ is effectivizable.

  \noindent{\bf General case.} We now assume that $\pi \colon X \to S$ is as in the statement of the Theorem. It remains to prove that the functor $\Psi_{\pi,I}$ is essentially surjective. To show this, we first reduce to the case where $\pi$ is quasi-compact. Let $(Z_n \to X_n)_{n\geq 0} \in
  \qfc{\cmpl{X}}{S}$.  Let
  ${O}_X$ denote the set of quasi-compact open substacks of $X$. The set
  $O_X$ is ordered by inclusion, and we note that $\{ |W \times_X
  Z_0|\}_{W \in {O}_X}$ is an open cover of the quasi-compact
  topological space $|Z_0|$. Thus, there is a $W \in {O}_X$ such that
  the canonical $X$-morphism $W\times_X Z_0 \to Z_0$ is an
  isomorphism. In particular, the map $Z_0 \to X$
  factors uniquely as $Z_0 \to W \hookrightarrow X$. Since the open
  immersion $W\hookrightarrow X$ is \'etale, it follows that the maps
  $Z_n \to X$ also factor compatibly through $W$. We can thus replace
  $X$ with $W$ and assume henceforth that $X$ is quasi-compact. 

  Next, we note that there is an open substack $X^{\mathrm{qf}}
  \subseteq X$ with the property that a point $x$ of
  $X$ belongs to $X^{\mathrm{qf}}$ if and only if the stabilizer at
  $x$ is finite. Indeed, by Chevalley's 
  Theorem \cite[IV.13.1.4]{EGA}, the locus of points in the inertia
  stack $I_{X/S} \to X$ that are isolated in their fibers is an 
  open subset of $I_{X/S}$. Intersecting this open set with the identity section $e \colon X \to
  I_{X/S}$ gives $X^{\mathrm{qf}}$ as $I_{X/S} \to X$ is a
  relative group stack (the dimension of a finite type group algebraic
  space over a field is its local dimension at the identity). 

  Arguing as before and applying the previous case, it 
  now remains to show that the map $Z_0 \to X$ factors through 
  $X^{\mathrm{qf}}$. So we let
  $I_X \to X$ (resp.~$I_{Z_0} \to Z_0$) denote the inertia stack of
  $X$ (resp.~$Z_0$) over $S$. The morphism $Z_0 \to X$ is separated and
  representable thus $I_{Z_0} \to I_X\times_{X} Z_0$ is a closed
  immersion over $Z_0$. By assumption, $I_X \to X$ has affine fibers
  and $I_{Z_0} \to Z_0$ is proper. Hence the fibers of $I_{Z_0}
  \to Z_0$ are proper and affine, thus finite. By Zariski's Main Theorem
  \cite[Cor.~A.2.1]{MR1771927}, $I_{Z_0} \to Z_0$ is finite, and so $Z_0$
  has finite diagonal over $S$ (because $Z_0$ is separated over
  $S$). We now form the $2$-commutative diagram:
  \[
  \xymatrix@-0.8pc{Z_0\times_X Z_0 \ar[r] \ar@/_1pc/[dr] & Z_0\times_S Z_0
    \ar[d] \\ & Z_0.} 
  \]
  The morphism $Z_0 \to X$ is also quasi-finite, thus the morphism
  $Z_0\times_X Z_0 \to Z_0$ is quasi-finite. The morphism $Z_0\times_S
  Z_0 \to Z_0$ has finite diagonal, thus $Z_0\times_X
  Z_0 \to Z_0\times_S Z_0$ is quasi-finite. We may now conclude that
  if $z$ is a point of $Z_0$, then its image in
  $X$ has finite stabilizer. Thus $Z_0$ factors
  through $X^{\mathrm{qf}}$ and we deduce the result. 
\end{proof}
\begin{rem}
It is possible that Theorem~\ref{thm:exthm} extends to any stack $X$ with
locally quasi-finite diagonal, that is, without requiring that $\Delta_X$ is
separated and quasi-compact. For such $X$, we consider the full subcategory
$\lqfc{\cmpl{X}_{/|X_0|}}{S}\subseteq \lqf{\cmpl{X}_{/|X_0|}}{S}$ consisting of
systems of locally quasi-finite representable morphisms $(Z_n\to X)_{n\geq 0}$
such that the composition $Z_0 \to X \to S$ is proper.

The generalization of Lemma~\ref{lem:fullyfaithful}, that $\Psi_{\pi,I}\colon
\lqfc{X}{S} \longrightarrow \lqfc{\cmpl{X}}{S}$ is fully faithful, follows as
for stacks with separated diagonals, but using Theorem~\ref{thm:exthm} instead
of \cite[Thm.\ 1.4]{MR2183251}.
The corresponding generalizations of
Lemmas~\ref{lem:jadm_eff} and~\ref{lem:new_patch} would follow if we extend the
results of Sections~\ref{ch:coeq} and~\ref{ch:stein_factorizations} to
include certain non-separated quasi-finite morphisms. This is probably not too
difficult to
accomplish, although one may have to develop a theory of non-separated formal
algebraic spaces.

Finally, to obtain the generalization of Theorem~\ref{thm:exthm} we would also
need a suitable Chow lemma for stacks with locally quasi-finite diagonals. It
would suffice to find a proper generically finite and flat morphism $p\colon
X'\to X$ together with an \'etale, not necessarily representable, morphism
$f\colon X'\to P$ with $P\to S$ projective. Whether such a proper covering
exists is not clear to the authors.
\end{rem}


\section{Algebraicity of the Hilbert stack}\label{ch:hilbstk}
Before we get to the proof Theorem \ref{thm:main}, we will require
an analysis of some spaces of sections. Let $T$ be a scheme and let $s \colon 
V' \to V$ be a representable morphism of algebraic $T$-stacks. Define
$\SEC{T}{V'/V}$ to be the sheaf that takes a $T$-scheme $W$ to the
set of sections of the morphism $V'\times_T W\to V\times_T W$. We
require an improvement of \cite[Prop.\ 5.10]{MR2239345} and
\cite[Lem.\ 2.10]{MR2233719}, which we prove using \cite[Thm.\
D]{hallj_coho_bc}.
\begin{prop}\label{prop:sections_mor}
  Let $T$ be a scheme and let $p \colon Z \to T$ be a morphism of algebraic stacks that is proper, flat, and of finite presentation. Let $s \colon Q \to Z$  be a 
  quasi-finite, separated, finitely presented, and representable
  morphism. Then the $T$-sheaf $\SEC{T}{Q/Z}=p_*Q$ is
  represented by a quasi-affine $T$-scheme which is of finite presentation. 
\end{prop}
\begin{proof}
  By a standard limit argument~\cite[Prop.\ B.3]{rydh-2009}, we can assume that
  $T$ is noetherian.
  By Zariski's Main Theorem~\cite[Thm.\ 16.5(ii)]{MR1771927}, there is a
  finite morphism
  $\bar{Q} \to Z$ and an open immersion $Q\hookrightarrow \bar{Q}$ over $Z$.
  In particular, we see that there is a natural transformation of
  $T$-sheaves $\SEC{T}{Q/Z} \to \SEC{T}{\bar{Q}/Z}$ which is
  represented by open immersions. Hence, we may
  assume for the remainder that the morphism $s \colon Q\to Z$ is
  finite. Next, observe that $\SEC{T}{Q/Z}$ is a subfunctor of
  the $T$-presheaf
  $\underline{\Hom}_{\Orb_Z/T}(s_*\Orb_Q,\Orb_Z)$. By
  \cite[Thm.\ D]{hallj_coho_bc},
  the sheaf $\underline{\Hom}_{\Orb_Z/T}(s_*\Orb_Q,\Orb_Z)$ is represented by
  a scheme that is affine and of finite type over $T$. Similarly,
  $\underline{\Hom}_{\Orb_Z/T}(\Orb_Z,\Orb_Z)$ and
  $\underline{\Hom}_{\Orb_Z/T}(s_*\Orb_Q\otimes_{\Orb_Z}s_*\Orb_Q,\Orb_Z)$ are
  affine. It follows that
  $\SEC{T}{Q/Z}\hookrightarrow \underline{\Hom}_{\Orb_Z/T}(s_*\Orb_Q,\Orb_Z)$
  is represented by closed immersions and the result follows.
\end{proof}
\begin{cor}\label{cor:hom_qfmor}
  Let $T$ be a scheme and let $X \to T$ be a morphism of algebraic stacks that is locally of finite presentation. Suppose that we have quasi-finite, separated and
  representable morphisms $s_i \colon Z_i \to X$ for $i=1$, $2$ such that
  $Z_1$ and $Z_2$ are proper, flat and of finite presentation over $T$.
  Then the functor $\Hom_X(Z_1,Z_2)$ on $\SCH{T}$, given by
  $T'
  \mapsto \Hom_{X_{T'}}\bigl((Z_1)_{T'},(Z_2)_{T'}\bigr)$, is represented by
  a scheme that is quasi-affine over $T$.
  In particular, the 
  open subfunctor $\Isom_X(Z_1,Z_2) \subset
  \Hom_X(Z_1,Z_2)$ parameterizing isomorphisms is represented by
  a scheme that is quasi-affine over~$T$. 
\end{cor}
\begin{proof}
  Note that $\Hom_{X}(Z_1,Z_2) = \SEC{T}{(Z_1\times_X Z_2)/Z_1}$.
  Thus, by Proposition \ref{prop:sections_mor} the functor
  $\Hom_X(Z_1,Z_2)$ has the asserted properties. 
\end{proof}
\begin{proof}[Proof of Theorem \ref{thm:main}]
The results of \cite[\S9]{hallj_openness_coh}, together with Theorem
\ref{thm:exthm}, show that the stack $\HS{X/S}$ is algebraic and
locally of finite presentation over $S$. We now apply Corollary
\ref{cor:hom_qfmor} to obtain the asserted separation property. 
\end{proof}


\appendix
\section{Coherent cohomology of formal schemes}
In this appendix, we prove that cohomology commute with flat base change of
formal
schemes. This is well-known for schemes, but we could not find a
suitable reference for formal schemes. Note that in the adic case, this
result follows from the more general result \cite[Prop.~7.2(b)]{MR1716706}.
\begin{prop}\label{prop:fml_bc}
  Consider a cartesian diagram of locally noetherian formal schemes:
  \[
  \xymatrix@-0.8pc{\fml{X}' \ar[r]^{p'} \ar[d]_{\varpi'} & \fml{X}
    \ar[d]^{\varpi} \\ \fml{Y}' \ar[r]^p & \fml{Y},}
  \]
  where $\varpi$ is proper and $p$ is flat. Let $\fml{F}$ be a
  coherent $\Orb_{\fml{X}}$-module, then for any $q\geq 0$ the base
  change morphism $p^*R^q\varpi_*\fml{F} \to
  R^q\varpi'_*p'^*\fml{F}$ is a topological  
  isomorphism. 
\end{prop}
\begin{proof}
  By \cite[III.3.4.5.1]{EGA}, the statement is
  Zariski local on $\fml{Y}$ and $\fml{Y}'$ so we may assume that
  $\fml{Y} = \spf_I R$ for some $I$-adic noetherian ring $R$,
  $\fml{Y}' = \spf_{I'} R'$ for some $I'$-adic noetherian ring $R'$,
  $IR' \subseteq I'$, and $R \to R'$ is a flat morphism of rings. It
  suffices to prove that the morphism $H^q(\fml{X},\fml{F})\cmpl{\tensor}_R
  R' \to H^q(\fml{X}',p'^*\fml{F})$ is a topological isomorphism. Note
  that $p$ factors as $\spf_{I'} R' \xrightarrow{r} \spf_{IR'}R'
  \xrightarrow{q}  \spf_I R$. Since $q$ is adic, we are reduced to proving the
  Proposition when $p$ is also adic or when $R=R'$. Note that
  cohomology can be shown to commute with adic flat base change by minor
  modifications to the statements and arguments of
  \cite[0\textsubscript{III}.13.7.8]{EGA}, combined with
  \cite[0\textsubscript{III}.13.7.7]{EGA} and
  \cite[III.3.4.3]{EGA} to deal with flat base change instead of localizations.
  It remains to prove the Proposition when $R=R'$.

  For each $k$, $l\geq 0$, set $R_{k,l} = R/(I^{k+1},I'^{l+1})$, $R_k
  = R/I^{k+1}$, and $R'_l = R/I'^{l+1}$. If $k\geq l$, then $R_{k,l} =
  R'_{l}$ (because $I \subseteq I'$). Also, 
  for each fixed $k$, the noetherian ring $R_k$ is $I'$-adically
  complete, and the natural map $R_k \to \varprojlim_l R_{k,l}$ is a
  topological isomorphism. Let $X_{k,l} = \fml{X}\tensor_R R_{k,l}$,
  ${X}_k = \fml{X}\tensor_R R_{k}$ (which we view as noetherian schemes), $F_{k,l} =
  \fml{F}\tensor_R R_{k,l}$, and ${F}_k = \fml{F}{\tensor}_R
  R_k$. By \cite[III.4.1.7]{EGA}, for each $k\geq 0$, there are
  natural induced topological isomorphisms:
  \[
  H^q(X_k,{F}_k)^\wedge \cong \varprojlim_l H^q(X_{k,l},F_{k,l}),
  \]
  where $H^q(X_k,{F}_k)^\wedge$ denotes the completion of
  the $R_k$-module $H^q(X_k,{F}_k)$ with respect to the $I'$-adic
  topology. Note, however, that $X_k \to \spec R_k$ is proper, thus
  $H^q(X_k,{F}_k)$ is a coherent $R_k$-module
  \cite[III.3.2.1]{EGA}. Consequently, $H^q(X_k,{F}_k)$ is
  $I'$-adically complete \cite[0\textsubscript{I}.7.3.6]{EGA}, and we have
  topological
  isomorphisms: 
  \[
  H^q(X_k,{F}_k) \cong \varprojlim_l H^q(X_{k,l},F_{k,l}).
  \]
  Let $X'_l = \fml{X}\tensor_R R'_l$ and $F_l'= \fml{F}\tensor_R
  R'_l$. Applying the functor $\varprojlim_k$ to both sides of the
  isomorphism above we obtain natural isomorphisms of $R'$-modules:
  \[
  \varprojlim_k H^q(X_k,{F}_k) \cong \varprojlim_k \varprojlim_l
  H^q(X_{k,l},F_{k,l}) \cong \varprojlim_l \varprojlim_k
  H^q(X_{k,l},F_{k,l}) \cong \varprojlim_l H^q(X'_l,F_l').
  \]
  By \cite[III.3.4.4]{EGA}, we have naturally induced topological
  isomorphisms: 
  \[
  H^q(\fml{X},\fml{F}) \to   \varprojlim_k H^q(X_k,{F}_k) \quad
  \mbox{and}\quad H^q(\fml{X}',\fml{F}') \to   \varprojlim_l H^q(X'_l,{F}'_l),
  \]
  and we deduce the result.
\end{proof}
As a Corollary, we can prove Zariski's Connectedness Theorem for
formal schemes, using the same argument as \cite[III.4.3.2 and 4.3.4]{EGA}.
\begin{cor}\label{cor:zar_con_fml}
  Let $\varpi \colon \fml{X} \to \fml{Y}$ be a proper and Stein morphism
  of locally noetherian formal schemes. Then $\varpi$ has
  geometrically connected fibers. 
\end{cor}
\begin{proof}
  Use \cite[0\textsubscript{III}.10.3.1]{EGA} and Proposition \ref{prop:fml_bc}
  to reduce to $\fml{Y}=\spf_{IR} R$ where $R$ is a complete local ring with
  maximal ideal $I$ and algebraically closed residue field $R/I$. Then
  $|\fml{X}|$ equals the unique fiber of $\varpi$. Finally observe that
  $\varpi_*\Orb_{\fml{X}}=\Orb_{\fml{Y}}$ is local so
  $|\fml{X}|$ is connected.
\end{proof}


\section{Henselian pairs}
In this appendix, we address some simple results about \'etale morphisms that we
could not locate in the literature. As an application, we 
strengthen a well-known result about the stability of henselian pairs
under proper morphisms. 

For an algebraic space $S$, let $\Et{S}$ denote the category of
morphisms of algebraic spaces $V \to S$ that are \'etale. Let
$\et{S}$ denote the small \'etale site of $S$: this has underlying
category $\Et{S}$ and the coverings are jointly surjective families of
morphisms. It is well-known that the natural functor:
\[
\Et{S} \to \SH{\et{S}},\quad (V \to S) \mapsto \Hom_S(-,V)
\]
is an equivalence of categories \cite[V.1.5]{MR559531}, where
$\SH{\et{S}}$ denotes category of $\et{S}$-sheaves of \emph{sets} (even
if $S$ is a scheme, we need to allow $V$ to be an algebraic space).
If $f \colon X \to S$ is a morphism of algebraic spaces, there are thus
natural functors $f^* \colon \Et{S} \to \Et{X}$ and $f_* \colon \Et{X} \to
\Et{S}$, with $f^*$ left adjoint to $f_*$. We define
$\Etad{X}{S}$ to be the full subcategory of $\Et{X}$ with objects $W
\to X$ such that for every geometric point $\bar{s}$ of $S$ and every
connected component $Z$ of $W_{\bar{s}}$, the induced morphism $Z \to
X_{\bar{s}}$ is an open and closed immersion. We now have the main
technical result of this appendix. 
\begin{prop}\label{prop:et_prop_geom_conn}
  Let $f \colon X \to S$ be a proper and surjective morphism of schemes
  with geometrically connected fibers. Then the functor $f^* \colon \Et{S}
  \to \Et{X}$ is fully faithful with image $\Etad{X}{S}$.
\end{prop}
\begin{proof}
  To show that $f^*$ is fully faithful, it is
  sufficient to prove that if $V \in \Et{S}$, then the natural
  map $V \to f_*f^*V$ is an isomorphism. This may be verified over the
  geometric points of $\bar{s}$ of $S$. Note, however, that the functor $f^*$ is  
  trivially compatible with arbitrary base change on $S$. Since $f$ is
  proper, it is a basic case of the Proper Base Change Theorem
  \cite[XII.5.1(i)]{SGA4} that $f_*$ is also compatible with arbitrary base
  change on $S$. Thus, it suffices to prove the result where $S=\spec
  k$ with $k$ an algebraically closed field. Since $f$ is surjective,
  this is trivial.

  To classify the image of $f^*$, it remains to show that if $W\in
  \Etad{X}{S}$, then the natural map $f^*f_*W \to W$ is an
  isomorphism. This may be verified at geometric points $\bar{x}$ of
  $X$. The remarks above about base change also apply here, so we are
  again reduced to the situation where $S=\spec k$ with $k$ an
  algebraically closed field. In this case, the connected components
  of $W$ are all open and closed subschemes of $X$. Since $X$ is
  geometrically connected, we conclude that $W = \amalg_{w\in
    \pi_0(W)} X$. It remains to show that the natural map
  $\amalg_{w\in \pi_0(W)} S \to f_*W$ is an isomorphism. This may be
  checked on global sections (since $k$ is algebraically closed)
  and there we have a natural bijection $\pi_0(W) \to \Hom_X(X,W)$.
\end{proof}
\begin{rem}\label{rem:et_sep_clopen}
  Let $f \colon X \to S$ be a proper morphism of schemes with
  geometrically connected fibers and let $\gamma \colon V_1 \to V_2$ be a morphism in
  $\Etad{X}{S}$. Then $\gamma$ is an open immersion (resp.\ an open and closed
  immersion, resp.\ separated) if and only if the induced
  morphism $f_*\gamma \colon f_*V_1 \to f_*V_2$ is such. The first is an
  easy consequence of the fact that open immersions are categorical
  monomorphisms in $\Et{X}$, and since $f_*$ preserves products, it
  preserves monomorphisms. The other two cases are even easier. 
\end{rem}
For a scheme $S$, let $\Of{S}$ denote its set of open and
closed subsets. Note that $\Of{X} \subseteq \Etad{X}{S}$. A
\emph{henselian pair} $(S,S_0)$ consists of a scheme $S$ and a closed 
immersion $S_0 \hookrightarrow S$ such that for any finite morphism $g \colon S' \to S$, the
natural map $\Of{S'} \to \Of{S'\times_S S_0}$ is bijective
\cite[IV.18.5.5]{EGA}. 
\begin{ex}\label{ex:hens_pair}
  Note that if $B$ is a noetherian ring, separated and complete for
  the topology defined by an ideal $I\subseteq B$, then $(\spec
  B,\spec B/I)$ is a henselian pair \cite[IV.18.5.16(ii)]{EGA}.
\end{ex}
We now obtain the following improvement of \cite[XII.5.5]{SGA4} and
\cite[IV.18.5.19]{EGA}, where it is proved for henselian pairs of the
form $(\spec A,\{\mathfrak{m}\})$, where $A$ is a henselian local ring and
$\mathfrak{m}$ is the maximal ideal of $A$. 
\begin{cor}\label{cor:hens_pair_prop}
  Let $(S,S_0)$ be a henselian pair and let $f \colon X \to S$ be a proper
  morphism of noetherian schemes. Then
  $(X,X\times_S S_0)$ is a henselian pair.
\end{cor}
\begin{proof}
  Let $X_0 = X\times_S S_0$ and take $f_0 \colon X_0 \to S_0$ to be the
  induced morphism. It is sufficient to prove that $\Of{X}
  \to \Of{X_0}$ is bijective. If $X \to \widetilde{X} \to S$ denotes the Stein 
  factorization of $f$, then Proposition \ref{prop:et_prop_geom_conn} and
  Remark \ref{rem:et_sep_clopen} implies
  that we have a bijection $\Of{\widetilde{X}} \to \Of{X}$. Since
  $\widetilde{X} \to S$ is finite, $(\widetilde{X},\widetilde{X}\times_S S_0)$ is
  a henselian pair \cite[IV.18.5.6]{EGA}. A similar analysis applies
  to the Stein factorization of $f_0$, $X_0 \to \widetilde{X}_0
  \xrightarrow{\tilde{f}_0} S_0$, 
  where we also obtain a bijection $\Of{\widetilde{X}_0} \to
  \Of{X_0}$. Denote by $g \colon X_0 \to \widetilde{X}\times_S S_0$ and
  $\tilde{g} \colon \widetilde{X}\times_S S_0 \to S_0$ the induced
  morphisms. There are now natural maps of coherent $\Orb_{S_0}$-algebras:  
  \[
  \tilde{g}_*\Orb_{\widetilde{X}\times_S S_0} \to \tilde{g}_*g_*\Orb_{X_0}
  \cong (f_0)_*\Orb_{X_0} \cong (\tilde{f}_0)_*\Orb_{\widetilde{X}_0}.
  \]
  Whence we obtain a finite $S_0$-morphism $h \colon \widetilde{X}_0 \to
  \widetilde{X}\times_S S_0$. But the morphisms $X_0 \to \widetilde{X}_0$ and
  $X_0 \to \widetilde{X}\times_S S_0$ are both surjective with
  geometrically connected fibers, thus $h$ also is surjective with geometrically
  connected fibers. Consequently, $h$ is a universal homeomorphism
  \cite[IV.18.12.11]{EGA} so that
  $\Of{\widetilde{X}\times_S S_0}\to \Of{\widetilde{X}_0}$ is bijective.
  The result follows. 
\end{proof}
Note that one of the strengths of henselian pairs is that they enable
the computation of sections of sheaves. Indeed, if $(S,S_0)$ is a henselian
pair, with $S$ quasi-compact and quasi-separated, and $V \in \Et{S}$, then the
natural map $\Hom_S(S,V) \to \Hom_{S_0}(S_0,V\times_S S_0)$ is
bijective \cite[XII.6.5(i)]{SGA4}.
\begin{cor}\label{cor:sections_proper_bc}
  Let $(S,S_0)$ be a henselian pair where $S$ is a noetherian scheme. Fix a
  scheme $Y$ over $S$. Let $Z$ and $X$ be schemes
  over $Y$ such that $X\to Y$ is
  \'etale and $Z \to S$ is proper. Then the natural map:
  \[
  \Hom_Y(Z,X) \to \Hom_Y(Z\times_S S_0,X)
  \]
  is bijective. 
\end{cor}
\begin{proof}
The map in the statement is identified with the natural map:
$$\Hom_Z(Z,V)\to \Hom_{Z\times_S S_0}(Z\times_S S_0,V\times_S S_0)$$
where $V=X\times_Y Z$ is \'etale over $Z$.
Since $(Z,Z\times_S S_0)$ is a henselian pair
(Corollary~\ref{cor:hens_pair_prop}), the result follows.
\end{proof}
\begin{rem}
Note that \cite[XII.6.5(i)]{SGA4} and \cite[XII.5.1(i)]{SGA4} are quite
elementary. The first result follows from the following facts: (i) every sheaf
is a direct limit of constructible sheaves~\cite[IX.2.7.2]{SGA4}, and (ii)
every constructible sheaf embeds in a product of push-forwards of constant
sheaves along finite morphisms~\cite[IX.2.14]{SGA4}. To prove the second result,
one reduces to the case where $S$ is henselian, then to $X=\Pr^n_S$ using a
suitable Chow lemma, and finally to $S$ noetherian and henselian using
approximation. Then one concludes using \cite[XII.5.1(i)]{SGA4} and the Stein
factorization~\cite[XII.5.8]{SGA4}.

For completeness, let us mention some generalizations of the results in this
section to algebraic spaces and stacks. The first result,
\cite[XII.6.5(i)]{SGA4}, is easily extended to noetherian stacks and to
quasi-compact and quasi-separated Deligne--Mumford stacks. The second result,
\cite[XII.5.1(i)]{SGA4}, then follows for noetherian stacks using the Chow
lemma~\cite[Thm.~1.1]{MR2183251} and for quasi-compact and quasi-separated
Deligne--Mumford stacks using the Chow
lemma~\cite[Thm.~B]{rydh-2009}. Proposition~\ref{prop:et_prop_geom_conn},
Corollary~\ref{cor:hens_pair_prop} and Corollary~\ref{cor:sections_proper_bc}
thus follow for such stacks. The noetherian assumption in
Corollary~\ref{cor:hens_pair_prop} can be removed for Deligne--Mumford stacks
using the fact that there exists Stein factorizations for proper morphisms of
non-noetherian stacks, although one gets an integral morphism instead of a
finite morphism. However, this is not a problem as integral morphisms can be
approximated by finite morphisms~\cite[Thm.~A]{rydh-2009}.
\end{rem}
\bibliography{bibtex_db/references}
\bibliographystyle{bibtex_db/dary}
\end{document}